\newtheorem{thmx}{Theorem}
\newtheorem{theorem}{Theorem}[section]
\newtheorem*{theorem*}{Theorem}
\newtheorem{proposition}[theorem]{Proposition}
\newtheorem{corollary}[theorem]{Corollary}
\newtheorem{lemma}[theorem]{Lemma}
\theoremstyle{definition}
\newtheorem{definition}[theorem]{Definition}
\newtheorem{remark}[theorem]{Remark}
\newtheorem{example}[theorem]{Example}
\newcommand{\la}{\langle}
\newcommand{\ra}{\rangle}
\newcommand{\Z}{\mathbb{Z}}
\newcommand{\C}{\mathbb{C}}
\newcommand{\R}{\mathbb{R}}
\newcommand{\T}{\mathbb{T}}
\newcommand{\ve}{\varepsilon}
\newcommand{\id}{\operatorname{id}}
\numberwithin{equation}{section}
\title{C*-algebras generated by representations of virtually nilpotent groups}
\author{Caleb Eckhardt}
\address{Department of Mathematics, Miami University, Oxford, OH, 45056}
\email{eckharc@miamioh.edu}
\begin{document}
\maketitle
\begin{abstract} We show that a C*-algebra generated by an irreducible representation of a finitely generated virtually nilpotent group satisfies the universal coefficient theorem and has real rank 0. This combines with previous joint work with Gillaspy and McKenney to show these C*-algebras are classified by their Elliott invariants.  
 When we further assume the group is nilpotent we build explicit Cartan subalgebras that are closely related to the group and representation, although the Cartan subalgebras are generally not C*-diagonals.
\end{abstract}
\section{Introduction}  This paper is primarily motivated by the Elliott classification program for nuclear C*-algebras. A C*-algebra is \textbf{classifiable} if it is separable, nuclear, simple, has finite nuclear dimension and satisfies the universal coefficient theorem (UCT). This terminology is derived from the fact that such algebras are \emph{classified} by their Elliott invariants \cite{Gong20,White23}.  

The main theorems of \cite{EckhardtGillaspy2016Irreducible}  and  \cite{EckhardtMcKenney2018Finitely} show that all irreducible representations of finitely generated nilpotent groups generate classifiable C*-algebras.  The main purpose of this work is to extend this to the \emph{virtually} nilpotent case. We show
\begin{thmx} \label{thm:A}  Let $G$ be a finitely generated virtually nilpotent group and $\pi$ an irreducible unitary representation of $G.$  Then the C*-algebra generated by $\pi(G)$ is classifiable, has a unique trace and real rank 0. 
\end{thmx}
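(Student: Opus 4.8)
The plan is to reduce the virtually nilpotent case to the nilpotent results of \cite{EckhardtGillaspy2016Irreducible, EckhardtMcKenney2018Finitely} by exploiting a finite-index nilpotent normal subgroup and Clifford--Mackey theory. First I would fix a finite-index nilpotent subgroup of $G$ and replace it by its normal core $N$, which is again nilpotent, finitely generated (being finite-index in the finitely generated group $G$), and now normal with $Q := G/N$ finite. Since $\pi$ is irreducible, Clifford theory presents $\pi|_N$ as a direct sum of finitely many $G$-conjugate irreducible representations of $N$, and $\pi$ is induced from an irreducible representation $\rho$ of the inertia subgroup $H$ (with $N \le H \le G$) whose restriction to $N$ is isotypic. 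As induction corresponds to a Morita equivalence of the generated C*-algebras (Green imprimitivity), and every property in the conclusion is invariant under stable isomorphism among separable algebras, I would replace $G$ by $H$ and assume from the outset that $\pi|_N$ is a multiple of a single irreducible $\sigma$. In this situation $C^*(\pi(G))$ is realized as a (possibly twisted) crossed product $B \rtimes_{\alpha,\omega} Q$, where $B := C^*(\sigma(N))$ is generated by an irreducible representation of the finitely generated nilpotent group $N$, the action $\alpha$ is conjugation by $Q$, and $\omega$ records the projective representation of $Q$ on the multiplicity space.

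If $\pi$ is finite-dimensional then $B$, hence $C^*(\pi(G))$, is a matrix algebra and the conclusion is immediate, so assume $\pi$ is infinite-dimensional; then $B$ is infinite-dimensional. By the cited nilpotent results $B$ is classifiable with a unique trace $\tau_B$, and I will take for granted (or establish through the explicit Cartan/groupoid model) that $B$ has real rank zero. Nuclearity of $C^*(\pi(G))$ is immediate since $G$ is amenable. Finite nuclear dimension is preserved under crossed products by finite groups, and since $B \rtimes_{\alpha,\omega} Q$ is then simple, separable, unital and nuclear with finite nuclear dimension, it is $\mZ$-stable by the Toms--Winter theory. Simplicity and uniqueness of the trace I would deduce from outerness of the residual $Q$-action: any inner $\alpha_g$ with $g \ne e$ would contradict irreducibility of the induced $\pi$, so Kishimoto-type results for (twisted) finite-group crossed products of simple C*-algebras apply, and the composite of $\tau_B$ with the canonical conditional expectation $E \colon B \rtimes_{\alpha,\omega} Q \to B$ is the unique trace $\tau$.

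For the UCT I would use that $B$ lies in the bootstrap class, which is part of its classifiability in the nilpotent case. After stabilizing and untwisting via the Packer--Raeburn trick, $(B \rtimes_{\alpha,\omega} Q)\otimes \mathcal{K}$ becomes an ordinary crossed product of a bootstrap algebra by the finite group $Q$; since the bootstrap class is closed under crossed products by finite groups, the crossed product satisfies the UCT, and hence so does $C^*(\pi(G))$ by stable isomorphism.

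The crux is real rank zero. Here I would invoke the characterization that a simple, separable, unital, nuclear, $\mZ$-stable C*-algebra with a unique trace $\tau$ has real rank zero if and only if $\tau_*(K_0)$ is dense in $\mathbb{R}$; the required $\mZ$-stability was obtained above. The canonical inclusion $B \hookrightarrow B \rtimes_{\alpha,\omega} Q$ is unital, and by uniqueness of the two traces the restriction of $\tau$ to $B$ is exactly $\tau_B$, whence $\tau_*(K_0(B \rtimes_{\alpha,\omega} Q)) \supseteq (\tau_B)_*(K_0(B))$. Thus density of $(\tau_B)_*(K_0(B))$ propagates to the crossed product, and real rank zero of $C^*(\pi(G))$ follows by stable isomorphism. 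The main obstacle is therefore concentrated entirely in the base case: verifying that $(\tau_B)_*(K_0(B))$ is dense for an infinite-dimensional irreducible representation of a finitely generated nilpotent group. I expect this to be exactly where the explicit Cartan subalgebra description earns its keep, furnishing the étale groupoid model needed to compute $K_0(B)$ and its pairing with $\tau_B$, and where the ``irrationality'' that forces $B$ to be simple and infinite-dimensional translates into genuine density of the trace values.
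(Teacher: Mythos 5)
Your UCT step rests on the assertion that the bootstrap class is closed under crossed products by finite groups. That is not a theorem; it is an open problem, and by work of Barlak and Szab\'o it is essentially equivalent to the UCT problem itself (the UCT holds for all separable nuclear C*-algebras if and only if every crossed product of $\mathcal{O}_2$ by a $\mathbb{Z}/p$-action, $p=2,3$, satisfies the UCT). The Packer--Raeburn trick removes the cocycle but does not change this. So, as written, your argument begs the question at exactly the point where the virtually nilpotent case is genuinely harder than the nilpotent one. The paper's strategy is designed to avoid this configuration: rather than presenting $C^*_\pi(G)$ as a finite-group crossed product over an infinite-dimensional nilpotent-group C*-algebra $B$, it replaces $\tau$ by the trivial extension $\phi$ of $\tau|_N$ (Lemma \ref{lem:trivialextsuffices}) so that the trace vanishes off $N$ and the twisted crossed product identification of Remark \ref{rem:tracetotwist} is legitimate (with your $\tau$ itself, $C^*_\pi(G)$ is in general only a direct summand of $B\rtimes_{\alpha,\omega}Q$, cf.\ Example \ref{ex:nonfaithful}), and then pushes the finite-dimensional part all the way down: by Carey--Moran the character $\tau_1$ vanishes off the FC-center, so after quotienting by $\ker(\tau_1)$ everything becomes a twisted crossed product of a \emph{finite-dimensional} algebra by an \emph{amenable} group (Theorem \ref{thm:vnilcase}), i.e.\ a direct sum of matrix amplifications of twisted group C*-algebras of amenable groups, for which the UCT follows from Tu's theorem in the Barlak--Li twisted formulation. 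Your decomposition keeps the infinite-dimensional algebra at the bottom and the finite group on top, which is precisely the arrangement for which no UCT permanence result is available.

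On real rank zero, the reduction via R\o rdam to density of $\tau_*(K_0)$ is the right move and matches the paper, but you leave the base case --- density of $(\tau_B)_*(K_0(B))$ for an infinite-dimensional irreducible representation of a finitely generated nilpotent group --- as an expectation, and you also take for granted that $B$ has real rank zero, which is not in the cited prior work; it is a new result of this paper. The paper proves it not through the Cartan subalgebra (which plays no role in Theorem \ref{thm:A}) but by a short direct argument (Corollary \ref{cor:RR0nilpotent}): writing $C^*_\pi(N)\cong M_n\otimes C^*_r(H,\sigma)$ with $H$ torsion free nilpotent, the unique roots property produces a central $g$ and some $x$ with $[\lambda_g,\lambda_x]$ of infinite order, so $C^*(\lambda_g,\lambda_x)$ is an irrational rotation algebra and the trace has dense range on $K_0$. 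A smaller point: the residual action of $Q$ on $B$ need not be outer (on matrix blocks it is necessarily inner, cf.\ Proposition \ref{prop:inner}), so the Kishimoto-type simplicity argument is not as automatic as you suggest; the paper simply quotes simplicity and uniqueness of the trace from prior work on characters of virtually nilpotent groups.
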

 Theorem \ref{thm:A} is  definitive in the following sense.  A finitely generated group $G$ is virtually nilpotent if and only if every irreducible representation of $G$ generates a simple nuclear C*-algebra (see \cite{Echterhoff90}).  In other words virtually nilpotent groups are the largest possible class of finitely generated groups to which one could hope to apply the Elliott classification program to all the C*-algebras generated by its irreducible representations.  

Theorem \ref{thm:A} is the culmination of the papers \cite{Eckhardt14,EckhardtGillaspy2016Irreducible,EckhardtMcKenney2018Finitely,EckhardtGillaspyMcKenney2019Finite}. Allow me to discuss some old roadblocks and new observations necessary to extend classifiability from the nilpotent case to the virtually nilpotent case.
Our joint work with Gillapsy and McKenney \cite{EckhardtGillaspyMcKenney2019Finite} shows finite nuclear dimension and therefore reduces our task to showing that such C*-algebras satisfy the UCT.

Let $G$ be a finitely generated nilpotent group, $\pi$ an irreducible representation of $G$ and $C^*_\pi(G)$ the C*-algebra generated by $\pi(G).$  In \cite{EckhardtGillaspy2016Irreducible} Elizabeth Gillaspy and I showed that  $C_\pi^*(G)$ decomposes as a crossed product of a Type I C*-algebra by iterated integer actions. The UCT then followed from Rosenberg and Schochet's \cite{RosenbergSchochet1987UCT}.  Using the well-known correspondence between traces and irreducible representations of $G$ (see e.g. \cite[Section 2]{EckhardtGillaspyMcKenney2019Finite})  one notices that \cite{EckhardtGillaspy2016Irreducible} really\footnote{Also important was $G/N$ being a poly-$\Z$ group although poly-$\Z$ can now be replaced with amenability by Theorem \ref{thm:UCTindtraces}. On the other hand,  for the construction of Cartan subalgebras in Theorem \ref{thm:B}, we rely heavily on the fact that $G/N$ is torsion free.} boils down to the fact that if $\tau$ is an extreme faithful trace on $G$, then there is a normal, virtually abelian subgroup $N\trianglelefteq G$ such that $\tau(G\setminus N) = \{ 0\}$ (see \cite[Theorem 4.5]{Carey84}). This is false for \emph{virtually} nilpotent groups  and I thank Itamar Vigdorovich for showing me this with the following 

\begin{example} \label{ex:nonfaithful} Let $G$ be a group and $\tau$ a faithful character (Section \ref{sec:trandcp}) on $G.$  Let $H=G^2 \rtimes \Z/2\Z$ where the action flips the $G$ factors.  Define $\phi:G^2\rightarrow \C$ by
$\phi(x,y) = \frac{1}{2}(\tau(x)+\tau(y)).$  Let $\phi$ also denote the trivial extension to $H.$ Then $\phi$ is clearly faithful.
It is straightforward that $\pi_\phi(H)'' \cong M_2\otimes \pi_\tau(G)''$ and hence $\phi$ is a  character. If $G$ is nilpotent then $H$ is virtually nilpotent and $\phi(x,e)\neq 0$ for all $x\in G.$ Hence if $G$ is 
a torsion free non-abelian nilpotent group (for example the integer Heisenberg group) and $\tau$ is induced from a faithful multiplicative character on the center, then there is no normal virtually abelian subgroup $N\trianglelefteq H$ such that $\phi(H\setminus N) = \{ 0 \}.$
\end{example}

In hindsight traces like this are what stymied our efforts in \cite{EckhardtGillaspy2016Irreducible} to extend to the virtual case. After digesting Example \ref{ex:nonfaithful}  I realized that  if $G$ is virtually nilpotent, then  $C^*_\pi(G)$ decomposes as a twisted crossed product of a finite dimensional C*-algebra by an amenable group (modulo some minor modifications). The bulk of this paper is devoted to detailing that decomposition.  The UCT then more-or-less reduces to an application of Tu's theorem on amenable groupoids \cite{Tu99}.  While I do not directly apply any of the results
in \cite{Levit22}, I was certainly inspired by the induced trace ideas used by Levit and Vigdorovich in the proof of \cite[Theorem G]{Levit22}.

Finally, an interesting byproduct of this work is the construction of explicit Cartan subalgebras 

\begin{thmx}\label{thm:B} Let $G$ be a finitely generated  nilpotent group and $\pi$ an irreducible representation of $G.$  Then there is a torsion free nilpotent group $H$,  a 2-cocycle $\sigma\in Z^2(H,\T)$ and an $n\geq 1$ such that
\begin{equation*}
C^*_\pi(G) \cong M_n\otimes C_r^*(H,\sigma)
\end{equation*}
Moreover there is a subgroup $N\leq H$ such that $D_n\otimes C_r^*(N,\sigma)$ is a Cartan subalgebra of $M_n\otimes C^*(H,\sigma)$ where $D_n$ is a diagonal MASA in $M_n.$
\end{thmx}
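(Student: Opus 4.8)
The plan is to first establish the algebraic isomorphism $C^*_\pi(G)\cong M_n\otimes C^*_r(H,\sigma)$ and then verify Renault's Cartan axioms for the candidate pair. For the isomorphism I would run the structural decomposition that occupies the body of the paper. Writing $Z=Z(G)$ and letting $\chi$ be the central character of $\pi$ (well defined by Schur's lemma), one replaces $G$ by the quotient by $\ker\chi\cap Z$ so that $\chi$ becomes faithful on the center, passes to a torsion-free situation using that a finitely generated nilpotent group is virtually torsion-free, and records the finite-index and finite-stabilizer data in a matrix amplification $M_n$. What remains after these reductions is a reduced twisted group C*-algebra $C^*_r(H,\sigma)$ of a torsion-free nilpotent group $H$, where $\sigma$ encodes the central character through the commutator pairing $(x,y)\mapsto\chi([x,y])$. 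Since $H$ is nilpotent it is amenable, so the full and reduced twisted algebras agree and the statement may be phrased with either.

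For the Cartan subalgebra I would choose $N\leq H$ to be a normal abelian subgroup that is \emph{maximal isotropic} for $\sigma$, i.e.\ maximal among subgroups on which $\sigma$ is symmetric; on such an $N$ the restricted cocycle is a coboundary, so $C^*_r(N,\sigma)\cong C^*(N)\cong C(\widehat N)$ is commutative. Normality of $N$ lets me present $C^*_r(H,\sigma)$ as a reduced crossed product $C(\widehat N)\rtimes_r (H/N)$ (up to a $\T$-valued cocycle on $H/N$ that is harmless for what follows), where $H/N$ acts on $\widehat N$ by the affine action built from conjugation and $\sigma$. In this picture the candidate Cartan subalgebra of $M_n\otimes C^*_r(H,\sigma)$ is $D_n\otimes C(\widehat N)$, and I would verify the axioms factor by factor: the canonical conditional expectations $M_n\to D_n$ and $C^*_r(H,\sigma)\to C^*_r(N,\sigma)$ tensor to a faithful expectation; the matrix units of $M_n$ together with the unitaries $\{u_h:h\in H\}$ normalize the subalgebra and generate; and the unit supplies the approximate identity. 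Stability of Cartan pairs under tensoring with the finite-dimensional pair $(M_n,D_n)$ then reduces everything to showing that $C(\widehat N)$ is a Cartan subalgebra of $C(\widehat N)\rtimes_r (H/N)$.

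The heart of the argument, and the step I expect to be hardest, is showing this crossed-product pair is Cartan, which amounts to proving that the action of $H/N$ on $\widehat N$ is topologically free. For each nontrivial $\bar h\in H/N$ the set of characters it fixes is a closed coset-subgroup of $\widehat N$, and I must show it is proper, i.e.\ has empty interior. This is exactly where torsion-freeness of $H$ (equivalently of the quotient flagged in the footnote) enters: maximality of the isotropic subgroup $N$ forces each such $\bar h$ to pair nontrivially with $N$, and torsion-freeness upgrades a nontrivial pairing to a \emph{proper} fixed coset, since in the absence of torsion the pairing cannot be identically trivial on an open set. Topological freeness then simultaneously yields that $C(\widehat N)$ is maximal abelian and that the underlying transformation groupoid is effective, giving the Cartan property by Renault's theorem.

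Finally I would explain why the subalgebra is in general not a C*-diagonal. A C*-diagonal corresponds to a principal groupoid, i.e.\ a genuinely free action, whereas the action of $H/N$ on $\widehat N$ is only topologically free: individual nontrivial elements may fix proper, lower-dimensional subtori of $\widehat N$, producing nontrivial isotropy over a meager set. These fixed points cannot be removed, so the groupoid is effective but not principal, and tensoring with the C*-diagonal $(M_n,D_n)$ does not repair this. Thus $D_n\otimes C^*_r(N,\sigma)$ is a Cartan subalgebra but, outside degenerate cases, fails the freeness required to be a C*-diagonal.
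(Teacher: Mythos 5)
Your route for the Cartan half --- realize $C^*_r(H,\sigma)$ as a twisted transformation groupoid over $\widehat N$ and prove topological freeness --- is genuinely different from the paper's, which never passes to the groupoid picture: Proposition \ref{prop:Cartantwist} verifies maximal abelianness directly by expanding a commuting element as $\sum_g x_g\lambda_g$, using $\ell^2$-summability to force finite $H$-conjugacy orbits, and then invoking the unique roots property of torsion-free nilpotent groups (Lemma \ref{lem:uroots}, Mal'cev). Your approach could in principle be made to work, and it has the virtue of making the ``topologically principal but not principal'' phenomenon (Example \ref{ref:nondiagonalsimple}) the organizing idea. But as written it has two real gaps. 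First, the existence of your subgroup $N$ is not justified: you need $N$ to be simultaneously \emph{normal} in $H$ (to get regularity and the crossed-product picture) and \emph{maximal} among all abelian $\sigma$-isotropic subgroups, normal or not (your own argument uses full maximality when a centralizing element $h$ with trivial cocycle character must be absorbed into $N$). Zorn's lemma gives one property or the other, not both. This is exactly the special fact about nilpotent groups the paper leans on --- in a nilpotent group a subgroup maximal with respect to being normal and abelian is automatically maximal abelian (\cite[Lemma 2.22]{Clement17}, applied to the central extension $\widetilde H\leq\widetilde G$ inside the unitary group) --- and the paper explicitly flags that it fails beyond the nilpotent case. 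Second, the topological-freeness step is only gestured at: the dual action of $\bar h$ on $\widehat N$ is a conjugation part composed with a translation by a character coming from $\sigma$, and the correct dichotomy is (i) the commutator subgroup $\langle [h,n]:n\in N\rangle\leq N$ is infinite, whence the fixed set lies in a proper closed coset with empty interior, versus (ii) it is trivial (torsion-freeness of $N$ rules out the finite nontrivial case), whence $h$ centralizes $N$ and either the cocycle character is nontrivial (no fixed points) or $h\in N$ by maximality. Your one-sentence justification conflates these two mechanisms, and it is precisely in case (ii) that the maximality issue above bites.

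On the first half, your sketch of $C^*_\pi(G)\cong M_n\otimes C^*_r(H,\sigma)$ is also thinner than what is needed. The paper does not obtain torsion-freeness of $H$ by passing to a finite-index torsion-free subgroup (which would change the algebra); it quotients by $\ker\tau$, applies Carey--Moran (\cite[Theorem 4.5]{Carey84}) to see that the character vanishes off the FC-center, notes that $C^*_\tau(\mathrm{FC}(N))$ is finite dimensional and that $N/\mathrm{FC}(N)$ is torsion free (Remark \ref{rem:FCcenter}), and then extracts $M_n\otimes C^*_r(H,\sigma)$ with $H\leq N/\mathrm{FC}(N)$ via Theorem \ref{thm:vnilcase}. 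Your central-character reduction alone does not produce a torsion-free $H$, and the vanishing of the trace off a subgroup with finite-dimensional image is the step that makes the twisted crossed product machinery applicable.
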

  Xin Li showed \cite{Li20} that every classifiable C*-algebra has a Cartan subalgebra--in fact he showed in the stably finite case that they have C*-diagonals. We already knew $C^*_\pi(G)$ was classifiable by \cite{EckhardtGillaspy2016Irreducible} so  the existence of a C*-diagonal was guaranteed by Li's theorem.   The point then of the last bit in Theorem \ref{thm:B} is to explicitly construct a Cartan subalgebra that is closely related to the group structure and representation. We do pay a price though;  the Cartan subalgebras we construct are typically not C*-diagonals (see Example \ref{ref:nondiagonalsimple}).

\section{Results} 
\subsection{Notation} We refer the reader to \cite{Baumslag71Nilpotent} for the basic definitions and results about nilpotent groups and to \cite[Section 2]{EckhardtGillaspyMcKenney2019Finite} for basic information on the C*-algebras generated by their representations.  Colloquially a group is nilpotent if it is ``built up" by central extensions. A group is virtually nilpotent if it contains a finite index nilpotent subgroup.
 
When studying representations of groups built up by extensions, the machinery of twisted crossed products is essential to understanding the structure of the C*-algebras they generate. We briefly discuss this machinery--just enough to fix our notation--and refer the reader to the standard reference
\cite{PackerRaeburn1989twisted} for more information.  Since we only deal with discrete groups and the canonical conditional expectation is a key point the reader may prefer to consult
Section 2 of \cite{Bedos91} instead.

Let $A$ be a unital C*-algebra and $G$ an amenable discrete group.  Let $(G,\alpha,\omega)$ be a twisted action where $\alpha$ denotes the automorphisms and $\omega$ denotes the cocycle. We denote by $A\rtimes_{\alpha,\omega} G$ the \textbf{reduced twisted crossed product} (see \cite[Section 2]{Bedos91}).\footnote{Some authors use $\alpha,\omega,r$ for the reduced crossed product and $\alpha,\omega$ for the full crossed product.  All of our crossed products will be reduced so we drop the ``$r$"}  We let $\{u_g:g\in G\}\subseteq A\rtimes_{\alpha,\omega} G$ denote the unitaries implementing $\alpha,$ i.e. $\alpha_g = \text{Ad } u_g.$  There is a faithful conditional expectation $E:A\rtimes_{\alpha,\omega} G\rightarrow A$ defined by 
$E(xu_g)=0$ if  $g\neq e$ and $x\in A$ that we call the \textbf{canonical conditional expectation} (see e.g. \cite[Theorem 2.2]{Bedos91}).  If $\tau$ is an $\alpha$-invariant  trace on $A$ then the map $\tau\circ E$ is a trace on $A\rtimes_{\alpha,\omega} G,$ which is faithful whenever $\tau$ is.  We will use the same letter $\tau$ to stand for the trace on $A$ and on the crossed product and call it the \textbf{canonical tracial extension of }$\tau.$ Notice that if $\phi$ is a faithful state then $\phi\circ E$ is a faithful state on $A\rtimes_{\alpha,\omega} G.$

In the special case that $A=\C$, the twisted crossed product is simply a twisted group C*-algebra \cite{PackerRaeburn1992structure} and we switch to the standard notation $C^*_r(G,\sigma)$ where $\sigma\in Z^2(G,\T)$ is a 2-cocycle. In this case we denote the canonical generating unitaries as $\{ \lambda_g:g\in G  \}$ since we  sometimes deal simultaneously with twisted group C*-algebras and twisted crossed products of the same group.

\subsection{Tracial representations and twisted crossed products} \label{sec:trandcp} A \textbf{trace} on a group $G$ is a normalized positive definite function $\tau:G\rightarrow \C$ that is constant on conjugacy classes. There is a well-known correspondence between traces on $G$ and traces on $C^*(G)$ given by linearization and restriction.  Following representation theory convention (instead of C*-convention) we call extreme traces \textbf{characters}.  In general we use the same letter $\tau$ to stand for a trace on a group or on $C^*(G).$ Notice that $\ker(\tau) = \{ g\in G:\tau(g)=1 \}$ is a normal subgroup of $G.$  We say $\tau$ is \textbf{faithful} if $\ker(\tau)=\{ e \}.$
Notice that $\tau$ also defines a trace on the quotient $G/\ker(\tau).$  In general we use the same letter $\tau$ to denote the trace on $G$ and on the quotient $G/\ker(\tau).$

Let $C^*_\tau(G)$ be the C*-algebra generated by the GNS representation associated with $\tau.$  There is a clear connection between $C^*_\tau(G)$ and twisted crossed products that we recall in the following
\begin{remark}\label{rem:tracetotwist} Let $G$ be a discrete amenable group and $N\trianglelefteq G$ a normal subgroup.  Let $\tau$ be a trace on $G$ such that $\tau(g)=0$ for all $g\not\in N.$  Let $c:G/N\rightarrow G$ be a choice of coset representatives sending the identity of $G/N$ to the identity of $G.$  For each $t\in G/N$ let $\alpha_t = \text{Ad}(\pi_\tau(c_t))\in \text{Aut}(C^*_\tau(N)).$  Define $\omega(s,t) = c_sc_tc_{st}^{-1}.$  Then $(\alpha,\omega)$ defines a twisted action of $G/N$ on $C^*_\tau(N)$ and the maps
\begin{equation*}
 \id:C^*_\tau(N)\to C^*_\tau(N), \quad u_t\mapsto \pi_\tau(c_t)
 \end{equation*}
 form a covariant representation. The condition $\tau(g)=0$ when $g\not\in N$ shows that this map preserves the  canonical tracial extension of $\tau$  and hence gives an isomorphism 
 \begin{equation*}
C^*_\tau(N)\rtimes_{\alpha,\omega}G/N\cong C^*_\tau(G).
 \end{equation*}
 Finally we mention a useful fact that is a tedious application of the definitions.  It also showcases a flexibility of twisted crossed products that is absent in regular crossed products. Let $A$ be any C*-algebra  with a twisted action $(\alpha,\omega)$ of $G.$  Let $N\trianglelefteq G$ be a normal subgroup.  Then there is an (obvious) twisted action $(\alpha',\omega')$ of $G/N$ on $A\rtimes_{\alpha|_N,\omega|_N} N$ such that
 \begin{equation*}
 A\rtimes_{\alpha,\omega} G \cong (A\rtimes_{\alpha|_N,\omega|_N} N)\rtimes_{\alpha'\omega'} G/N
 \end{equation*}
\end{remark}
\subsection{Further structure of twisted crossed products}
The following  is a special case of  \cite[Theorem 2.13]{Green1980imprimitivity}. In this case the proof is brief so we include it--in the modern language of twisted crossed products--for the convenience of the reader.

\begin{proposition}\label{prop:transitive} Let $A=A_1\oplus \cdots \oplus A_n$ be a unital C*-algebra such that each $A_i$ has no non-trivial central projections.  Let $(G,\alpha,\omega)$ be a twisted action on $A$ with $G$ a discrete group.  Let $\{ p_1,...,p_n \}$ be the minimal central projections of $A$ corresponding to the summands $A_1,...,A_n.$  Then $\alpha$ defines a true action of $G$ on  $\{ p_1,...,p_n \}.$  Let $H\leq G$ be the stabilizer of $p_1.$ If  $G$ acts transitively on $\{ p_1,...,p_n \}$,  then
\begin{equation*}
A\rtimes_{\alpha,\omega} G \cong M_n\otimes (A_1 \rtimes_{\tilde{\alpha},\tilde\omega} H)
\end{equation*}
where $\tilde\alpha, \tilde\omega$ is the twisted action restricted to $H$ and $A_1,$ i.e. $\tilde \alpha_t(\cdot) : = \alpha_t(p_1 \cdot)$ and $\tilde \omega(s,t) = p_1\omega(s,t) $ for all $s,t\in H.$
\end{proposition}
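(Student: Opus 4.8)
The plan is to manufacture a full system of $n \times n$ matrix units inside $A \rtimes_{\alpha,\omega} G$ and identify the resulting corner with the claimed subcrossed product. First I would settle the action on projections. Since each $A_i$ has no non-trivial central projections, the center of $A$ is $Z(A_1)\oplus\cdots\oplus Z(A_n)$ and its minimal projections are exactly $p_1,\dots,p_n$; every automorphism permutes them. For the twisted action the only subtlety is genuineness: from $\alpha_s\alpha_t = \Ad(\omega(s,t))\,\alpha_{st}$ and the fact that $\Ad$ of a unitary fixes $Z(A)$ pointwise, the induced permutations compose correctly, so $G$ acts honestly on $\{p_1,\dots,p_n\}$. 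The same computation shows $\Ad(\omega(s,t))$ fixes each $p_i$, i.e.\ every $\omega(s,t)$ commutes with every $p_i$; I will use this repeatedly.

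Assuming transitivity, fix coset representatives $g_1=e,g_2,\dots,g_n$ for $G/H$ with $\alpha_{g_i}(p_1)=p_i$, so that $[G:H]=n$, and set $v_i:=p_iu_{g_i}$. Using $u_g p_1 u_g^* = \alpha_g(p_1)$ one checks $v_i^*v_i=p_1$ and $v_iv_i^*=p_i$, whence $e_{ij}:=v_iv_j^*$ satisfy the matrix-unit relations with $e_{ii}=p_i$, $\sum_i e_{ii}=1$, and $e_{11}=p_1$. By the standard fact that a unital C*-algebra $C$ carrying a full system of $n\times n$ matrix units summing to $1$ is isomorphic to $M_n\otimes e_{11}Ce_{11}$, this reduces the statement to identifying the corner $p_1(A\rtimes_{\alpha,\omega}G)p_1$ with $A_1\rtimes_{\tilde\alpha,\tilde\omega}H$.

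For the corner, the key point is that $u_g p_1 = \alpha_g(p_1)u_g = p_{g\cdot 1}u_g$ together with centrality and orthogonality of the $p_i$ forces $p_1 a u_g p_1 = p_1 a p_{g\cdot 1}u_g = 0$ unless $g\in H$. Hence the corner is densely spanned by the elements $(p_1 a p_1)\,w_h$ with $h\in H$ and $w_h:=p_1u_h$. I would then verify that the $w_h$ are unitaries of the corner (unit $p_1$) implementing $(\tilde\alpha,\tilde\omega)$: the identity $\alpha_h(p_1)=p_1$ for $h\in H$ gives $w_h b w_h^* = \alpha_h(b)=\tilde\alpha_h(b)$ for $b\in A_1$ and shows $\tilde\alpha_h(A_1)\subseteq A_1$, while $w_sw_t = p_1\omega(s,t)u_{st}=\tilde\omega(s,t)w_{st}$ uses that $\omega(s,t)$ commutes with $p_1$ (so that $\tilde\omega(s,t)=p_1\omega(s,t)$ is a unitary of $A_1$). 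Thus $(\id_{A_1},w)$ is a covariant representation of $(A_1,H,\tilde\alpha,\tilde\omega)$ that generates the corner, giving a canonical surjection $\Phi:A_1\rtimes_{\tilde\alpha,\tilde\omega}H \twoheadrightarrow p_1(A\rtimes_{\alpha,\omega}G)p_1$.

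The step I expect to be the real content---as opposed to bookkeeping---is seeing that this surjection is an isomorphism onto the \emph{reduced} crossed product. Here I would use the canonical conditional expectations. Restricting $E:A\rtimes_{\alpha,\omega}G\to A$ to the corner lands in $p_1Ap_1=A_1$, and since $bw_h=bu_h$ for $b\in A_1$ it sends $\sum_h b_hw_h\mapsto b_e$, which is exactly the canonical expectation $E_1:A_1\rtimes_{\tilde\alpha,\tilde\omega}H\to A_1$ carried through $\Phi$. Both $E$ and $E_1$ are faithful and $E\circ\Phi=E_1$, so if $\Phi(y)=0$ then $E_1(y^*y)=E(\Phi(y^*y))=0$, whence $y^*y=0$ and $y=0$; thus $\Phi$ is injective and the identification is complete. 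Notably this needs only faithfulness of the two expectations, so no amenability hypothesis on $H$ is required, and everything outside this last step is a routine manipulation of the twisted-crossed-product relations.
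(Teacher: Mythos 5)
Your proof is correct, and the core construction is the same as the paper's: you build exactly the matrix units $e_{ij}=u_{g_i}p_1u_{g_j}^*$ that the paper uses. The organization differs in one substantive way. The paper asserts the identification $A_1\rtimes_{\tilde\alpha,\tilde\omega}H\cong p_1(A\rtimes_{\alpha,\omega}G)p_1$ as holding ``by definition of $\tilde\alpha,\tilde\omega$,'' and then proves the tensor decomposition by exhibiting a commuting pair of generating subalgebras: the copy of $M_n$ spanned by the $e_{ij}$ and the image of $\pi(a)=\sum_i u_{t_i}au_{t_i}^*$, with the main computation being that each $u_xp_i$ factors through these. You instead invoke the standard fact that a full system of matrix units summing to $1$ gives $C\cong M_n\otimes e_{11}Ce_{11}$, which shifts all the work onto the corner identification --- precisely the step the paper treats as definitional --- and you then prove that identification honestly: the support computation showing only $h\in H$ survives compression by $p_1$, the covariance relations for $w_h=p_1u_h$, and the intertwining $E\circ\Phi=E_1$ of faithful conditional expectations to force injectivity. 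The one point worth tightening is your phrase ``giving a canonical surjection'' out of the \emph{reduced} crossed product: a covariant representation canonically gives a map out of the \emph{full} crossed product, and descending it to the reduced one requires the symmetric half of your expectation argument (faithfulness of $E$ on the corner shows $\ker(\text{full}\to\text{corner})\supseteq\ker(\text{full}\to\text{reduced})$, faithfulness of $E_1$ gives the reverse). Since you state that both expectations are faithful and use both, this is a matter of phrasing rather than a gap, and as you note it makes the argument independent of amenability of $H$; in the paper's standing conventions $G$ is amenable, so full and reduced coincide and the issue does not arise there.
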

\begin{proof} Notice that by definition of $\tilde \alpha,\tilde\omega$ we have 
\begin{equation}\label{eq:tcpassub}
A_1 \rtimes_{\tilde{\alpha},\tilde\omega} H \cong p_1(A\rtimes_{\alpha,\omega} G)p_1
\end{equation}
Fix a faithful state $\phi$ on $A\rtimes_{\alpha,\omega} G$ such that $\phi = \phi\circ E$ where $E$ is the canonical conditional expectation. 
Choose $e = t_1,...,t_n\in G$ such that
\begin{equation}\label{eq:unitarychoice}
u_{t_i}p_1u_{t_i}^* = p_i.
\end{equation}
Easy calculations show that 
\begin{equation*}
e_{i,j} = u_{t_i}p_1u_{t_j}^* \text{ for }1\leq i,j\leq n
\end{equation*}
form a system of matrix units and hence generate a (unital) copy of $M_n$ inside of $A\rtimes_{\alpha,\omega} G.$  Under the identification (\ref{eq:tcpassub}), the map $\pi: A_1 \rtimes_{\tilde{\alpha},\tilde\omega} H\rightarrow A\rtimes_{\alpha,\omega} G$ defined by
\begin{equation*}
\pi(a) = \sum_{i=1}^n   u_{t_i}au_{t_i}^*
\end{equation*}
 is an injective *-homomorphism.  Let $a\in p_1(A\rtimes_{\alpha,\omega} G)p_1.$ For each $j=1,...,n$ we have (using the facts that $t_1=e$ and $ap_1=p_1a=a$) that
\begin{equation*}
u_{t_j}p_1\left(\sum_{i=1}^n u_{t_i}au_{t_i}^*\right)=u_{t_j}p_1a = u_{t_j}au_{t_j}^*u_{t_j}p_1 = \left(\sum_{i=1}^n u_{t_i}au_{t_i}^*\right)u_{t_j}p_1.
\end{equation*}
Since the ``column" $\{ u_{t_j}p_1: j=1,...,n  \}$ generates $M_n$ the range of $\pi$ and $M_n$ commute with each other. To complete the proof we show that the matrix units and the image of $\pi$ generate $A\rtimes_{\alpha,\omega} G.$ 

It is clear that $A\subseteq C^*(\text{Im}(\pi),e_{11},...,e_{nn}).$  Let $x\in G$ and fix $1\leq i\leq n$ and let $j$ satisfy $u_xp_iu_x^* = p_j.$   Notice that $t_j^{-1}xt_i \in H.$  We have
\begin{align*}
u_xp_i & = p_ju_xp_i\\
& =p_j(u_{t_j}p_1u_{t_j}^*)u_x(u_{t_i}p_1u_{t_i}^*)p_i \quad \text{ by (\ref{eq:unitarychoice}) }\\
& = (p_ju_{t_j}p_1)u_{t_j}^*u_xu_{t_i}(p_1u_{t_i}^*p_i)\\
& = (p_ju_{t_j}p_1)au_{t_j^{-1}xt_i}(p_1u_{t_i}^*p_i)\quad \text{  where }a\in A \text{ comes from the cocycle}\\
& = (u_{t_j}p_1)p_1\pi(p_1au_{t_j^{-1}xt_i})(p_1u_{t_i}^*)
\end{align*}
Hence $u_xp_i$ is a product of matrix units and elements in the range of $\pi.$  Since $u_x = \sum_{i=1}^n u_xp_i$ this completes the proof.
\end{proof}
A version of the following is known to anyone who has played around with an inner action on a *-algebra. Here is a brief proof for those readers who have not.
\begin{proposition}\label{prop:inner} Let $A$ be a unital C*-algebra with trivial center and $G$ a discrete amenable group with a twisted point-wise inner action $(\alpha,\omega)$ on $A.$  Then  there is a 2-cocycle $\sigma\in Z^2(G,\T)$ such that $A\rtimes_{\alpha,\omega} G\cong A\otimes_{\textup{min}} C^*_r(G,\sigma) .$
\end{proposition}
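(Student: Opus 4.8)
The plan is to replace the implementing unitaries $u_g$ by corrected unitaries that commute with $A$ and obey a \emph{scalar} cocycle relation, and then to recognize $A\rtimes_{\alpha,\omega}G$ as the algebra generated by two commuting copies, one of $A$ and one of a twisted group algebra. First, since the action is point-wise inner, for each $g\in G$ I would choose a unitary $v_g\in A$ with $\alpha_g=\Ad(v_g)$ and $v_e=1$. Because $A$ has trivial center, such a $v_g$ is unique up to a scalar in $\T$; this uniqueness is the engine of the whole argument. Setting $w_g=v_g^* u_g$, a one-line computation using $u_g a u_g^*=\alpha_g(a)=v_g a v_g^*$ shows that each $w_g$ commutes with $A$.

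Second, I would show that the $w_g$ satisfy a scalar twisted-group relation. Comparing the two implementations of $\alpha_g\alpha_h=\Ad(\omega(g,h))\alpha_{gh}$, namely $\Ad(v_g v_h)$ and $\Ad(\omega(g,h)v_{gh})$, the trivial-center uniqueness forces $v_g v_h=\overline{\sigma(g,h)}\,\omega(g,h)v_{gh}$ for a scalar $\sigma(g,h)\in\T$. Substituting this into $w_g w_h=v_g^* u_g v_h^* u_h$ (and using $u_g v_h^* u_g^*=\alpha_g(v_h^*)$ together with $u_g u_h=\omega(g,h)u_{gh}$) collapses all the $v$'s and yields $w_g w_h=\sigma(g,h)w_{gh}$ with $w_e=1$. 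Associativity of this product is precisely the $2$-cocycle identity, so $\sigma\in Z^2(G,\T)$. I would flag the scalar-valuedness of $\sigma$ as the crux of the proof: without trivial center the best one obtains is an $A$-valued cocycle and the tensor decomposition fails.

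Third, I would identify the pieces. Let $W=C^*(\{w_g:g\in G\})$. The relation $w_g w_h=\sigma(g,h)w_{gh}$ makes $\{w_g\}$ a $\sigma$-representation, giving a surjection $C^*(G,\sigma)\twoheadrightarrow W$; since $G$ is amenable, $C^*(G,\sigma)=C_r^*(G,\sigma)$. Restricting the canonical faithful conditional expectation $E$ to $W$ gives $E(w_g)=\delta_{g,e}1$, so $E|_W=\tau_W(\cdot)1$ for a faithful tracial state $\tau_W$ with $\tau_W(w_g)=\delta_{g,e}$; as this is the canonical trace, $W\cong C_r^*(G,\sigma)$. Since $A$ and $W$ commute and $u_g=v_g w_g$, together they generate all of $A\rtimes_{\alpha,\omega}G$.

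Finally I would assemble the tensor decomposition. Because $G$ is amenable, $C_r^*(G,\sigma)\cong W$ is nuclear, so the universal $*$-homomorphism out of $A\otimes_{\max}W$ determined by the two commuting inclusions is already defined on $A\otimes_{\min}W\cong A\otimes_{\min}C_r^*(G,\sigma)$; call it $\pi$, with $\pi(a\otimes\lambda_g)=a w_g$. It is surjective since $A$ and $W$ generate. For injectivity I would compare conditional expectations: $E\circ\pi=\id_A\otimes E_\sigma$ on the algebraic tensor product (both send $a\otimes\lambda_g$ to $\delta_{g,e}a$), where $E_\sigma$ is the canonical faithful expectation on $C_r^*(G,\sigma)$. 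Since $\id_A\otimes E_\sigma$ is faithful on the minimal tensor product, $\pi(x)=0$ forces $(\id_A\otimes E_\sigma)(x^*x)=0$ and hence $x=0$. Thus $\pi$ is an isomorphism $A\otimes_{\min}C_r^*(G,\sigma)\cong A\rtimes_{\alpha,\omega}G$. The only places amenability enters are to pass between the full and reduced twisted group algebras and to invoke nuclearity for the minimal tensor identification.
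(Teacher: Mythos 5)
Your proposal is correct and follows essentially the same route as the paper: correct the implementing unitaries to $w_g=v_g^*u_g\in A'$, use triviality of the center to get a scalar cocycle $\sigma$, and then combine the two commuting copies into a map on $A\otimes_{\min}C^*_r(G,\sigma)$ whose injectivity is checked against the canonical conditional expectation (the paper phrases this via a faithful state $\phi=\phi\circ E$ and the faithfulness of $\phi|_A\otimes\tau$ on the minimal tensor product, which is the same device as your slice-map argument). The only cosmetic difference is that the paper defines $\sigma(s,t)=(v_s^*u_s)(v_t^*u_t)(u_{st}^*v_{st})$ directly and observes it lies in $A\cap A'=\C\cdot 1$, rather than deriving it from uniqueness of $v_g$ up to a scalar.
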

\begin{proof} For each $t\in G\setminus\{ e \}$ choose a unitary $v_t\in A$ such that $\text{Ad}(v_t) = \text{Ad}(u_t)$ and set $v_e = 1_A.$  Then for each $s\in G$ we have $v_s^*u_s\in A'.$ Then for $s,t\in G$ we have
\begin{equation*}
\sigma(s,t):= (v_s^*u_s)(v_t^*u_t)(u_{st}^*v_{st}) = v_s^*\alpha_s(v_t^*)\omega(s,t)v_{st} \in A\cap A' =\C\cdot 1_A.
\end{equation*}
It follows that $\sigma\in Z^2(G,\T).$  
Let $\phi$ be a faithful state on $A\rtimes_{\alpha,\omega} G$ such that $\phi\circ E = \phi$ where $E$ is the canonical conditional expectation onto $A.$ Let $\tau$ be the canonical faithful trace on $C_r^*(G,\sigma).$ Then $\phi|_A\otimes \tau$ is a faithful state on $A\otimes_{\text{min}}C_r^*(G,\sigma).$  
The map $\pi(t) =  v_t^*u_t$ is a $\sigma$-representation of $G$ and $0=\tau(\lambda_t) = \phi(v_t^*u_t)$, for $t\neq e$ hence $\pi$ extends to an injective *-homomorphism of $C^*_r(G,\sigma).$  Then $\pi$ and $\iota:A\to A\subseteq A\rtimes_{\alpha,\omega} G$ have commuting ranges.  Since $C^*_r(G,\sigma)$ is nuclear we obtain a surjective *-homomorphism  $\iota\otimes \pi: A\otimes_{\text{min}}C^*_r(G,\sigma)\to A\rtimes_{\alpha,\omega} G.$ Since $\phi\circ (\iota\otimes \pi) = \phi|_A\otimes \tau$, the map is injective as well and therefore a *-isomorphism. 
\end{proof}

\subsection{UCT and classifiability}
Let $\tau$ be a trace on an amenable group $G$.  Roughly speaking, we find that the larger the zero set of $\tau$ the easier it is to show $C^*_\tau(G)$ satisfies the UCT.  The next four results smooth out that statement a little bit.

\begin{theorem}\label{thm:vnilcase} Let $G$ be a discrete amenable group and $\tau$ a trace on $G.$ Suppose there is a normal subgroup $N\trianglelefteq G$ such that
\begin{enumerate}
\item $C^*_\tau(N)$ is finite dimensional, and 
\item $\tau(g)=0$ for all $g\not\in N$
\end{enumerate}
Then there are finite index subgroups $H_1,...,H_n \leq G/N,$ cocycles $\sigma_i\in Z^2(H_i,\T)$ and integers $k_1,...,k_n$ such that
\begin{equation*}
C^*_\tau(G) \cong \bigoplus_{i=1}^n M_{k_i}\otimes C^*_r(H_i,\sigma_i)
\end{equation*}
In particular, $C^*_\tau(G)$ satisfies the UCT.
\end{theorem}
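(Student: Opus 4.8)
The plan is to chain together the three structural results already established. First I would invoke Remark \ref{rem:tracetotwist}: hypothesis (2) gives $\tau(g)=0$ for all $g\not\in N$, so a choice of coset representatives yields a twisted action $(\alpha,\omega)$ of $G/N$ on $A:=C^*_\tau(N)$ together with an isomorphism $C^*_\tau(G)\cong A\rtimes_{\alpha,\omega}G/N$. By hypothesis (1) the algebra $A$ is finite dimensional, so we may write $A=A_1\oplus\cdots\oplus A_m$ with each $A_i\cong M_{d_i}(\C)$ a full matrix algebra; in particular each $A_i$ is simple, has no non-trivial central projections, and has only inner automorphisms. These are precisely the features exploited by Propositions \ref{prop:transitive} and \ref{prop:inner}.

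Next I would pass to the orbit decomposition. By Proposition \ref{prop:transitive}, $\alpha$ induces a genuine action of $G/N$ on the minimal central projections $\{p_1,\dots,p_m\}$. Decompose this finite set into $G/N$-orbits $O_1,\dots,O_n$ and set $q_j=\sum_{p\in O_j}p$. Each $q_j$ is central in $A$ and, being an orbit sum, is fixed by $\alpha$, hence is a central projection of the crossed product. A routine check---compressing a finite sum $\sum_t a_t u_t$ by $q_j$ and using $u_tq_j=q_ju_t$---identifies $q_j\bigl(A\rtimes_{\alpha,\omega}G/N\bigr)q_j$ with $(q_jA)\rtimes_{\alpha,\omega}G/N$, so that
\begin{equation*}
C^*_\tau(G)\cong\bigoplus_{j=1}^n (q_jA)\rtimes_{\alpha,\omega}G/N.
\end{equation*}
On each summand $G/N$ acts \emph{transitively} on the minimal central projections of $q_jA$, so Proposition \ref{prop:transitive} now applies: letting $H_j\leq G/N$ be the stabilizer of a chosen $p_{i(j)}\in O_j$, it yields $(q_jA)\rtimes_{\alpha,\omega}G/N\cong M_{|O_j|}\otimes\bigl(A_{i(j)}\rtimes_{\tilde\alpha,\tilde\omega}H_j\bigr)$. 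The orbit--stabilizer relation $|O_j|=[G/N:H_j]\leq m$ shows that $H_j$ has finite index.

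I would then apply Proposition \ref{prop:inner} to each factor $A_{i(j)}\rtimes_{\tilde\alpha,\tilde\omega}H_j$. Since $A_{i(j)}\cong M_{d_{i(j)}}(\C)$ has trivial center and every automorphism of a matrix algebra is inner, the restricted action $\tilde\alpha$ is point-wise inner; moreover $H_j$, being a subgroup of the quotient $G/N$ of the amenable group $G$, is amenable. Proposition \ref{prop:inner} then produces $\sigma_j\in Z^2(H_j,\T)$ with $A_{i(j)}\rtimes_{\tilde\alpha,\tilde\omega}H_j\cong M_{d_{i(j)}}(\C)\otimes C^*_r(H_j,\sigma_j)$. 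Absorbing the two matrix factors and setting $k_j=|O_j|\cdot d_{i(j)}$ gives the asserted decomposition $C^*_\tau(G)\cong\bigoplus_{j=1}^n M_{k_j}\otimes C^*_r(H_j,\sigma_j)$. For the UCT, each $C^*_r(H_j,\sigma_j)$ is the reduced twisted group C*-algebra of a discrete amenable group, hence the C*-algebra of an amenable (twisted) groupoid, which satisfies the UCT by Tu's theorem \cite{Tu99}; since the UCT class is closed under tensoring with finite-dimensional algebras and under finite direct sums, $C^*_\tau(G)$ satisfies the UCT.

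The main---indeed essentially the only---place requiring care is the orbit decomposition: one must verify that compression by the invariant central projections $q_j$ converts the single crossed product into a direct sum of crossed products of the corners $q_jA$, so that Proposition \ref{prop:transitive} can be applied orbit-by-orbit. This matters because the full action of $G/N$ on $\{p_1,\dots,p_m\}$ need not be transitive, and it is exactly this splitting that produces the direct-sum index $n$ and the distinct finite-index subgroups $H_j$ appearing in the conclusion.
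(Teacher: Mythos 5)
Your proposal is correct and follows essentially the same route as the paper: realize $C^*_\tau(G)$ as $C^*_\tau(N)\rtimes_{\alpha,\omega}G/N$ via Remark \ref{rem:tracetotwist}, split along the orbits of the action on minimal central projections, and apply Propositions \ref{prop:transitive} and \ref{prop:inner}, with the UCT coming from Tu's theorem (the paper cites the twisted version in \cite{Barlak17}). Your extra care with the orbit decomposition and the orbit--stabilizer argument for finite index simply makes explicit what the paper leaves implicit.
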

\begin{proof} First realize $C^*_\tau(G)\cong C^*_\tau(N)\rtimes_{\alpha,\omega}G/N$ as in Remark \ref{rem:tracetotwist}. Since $C^*_\tau(N)$ is finite dimensional we decompose 
\begin{equation*}
C^*_\tau(N) \cong \bigoplus_{i=1}^n F_i
\end{equation*}
where each $F_i$ is a finite dimensional C*-algebra and 
\begin{enumerate}
\item Each $\alpha_t$ leaves the subalgebra $F_i$ globally invariant.
\item The (true) action of $G/N$ on the minimal central projections of $F_i$ is transitive.
\end{enumerate}
We then have that 
\begin{equation*}
C^*_\tau(N)\rtimes_{\alpha,\omega}G/N \cong \bigoplus_{i=1}^n (F_i\rtimes_{\alpha_i,\omega_i} G/N)
\end{equation*}
where $\alpha_i,\omega_i$ is the action restricted to $F_i.$  Since every automorphism of a matrix algebra is inner, the structure result follows from Propositions \ref{prop:transitive} and \ref{prop:inner}.  By the twisted version \cite[Theorem 3.1]{Barlak17} of Tu's \cite{Tu99}, the twisted group C*-algebras $C^*_r(H_i,\sigma_i)$ all satisfy the UCT.  Since the UCT is preserved under stable isomorphism and direct sums the conclusion follows.
\end{proof}
Weakening the hypotheses in Theorem \ref{thm:vnilcase} we forfeit the structure of twisted group C*-algebras but retain the UCT.  Theorem \ref{thm:UCTindtraces} will not be used to prove our main theorem but it should find future applications. We first require the following consequence of Tu's  \cite{Tu99}.  Barlak and Li give an exposition of some of Tu's argument in the more general setting of twisted groupoids and their exposition carries over nearly verbatim to show the following.  In our proof we only mention the slight changes needed from the proof in  \cite[Theorem 3.1]{Barlak17} and keep their notation.
This lemma and arguments used in the proof are apparently well-known to experts, I thank Rufus Willett for showing it to me. The same result appears as a corollary of a more general theorem in \cite[Corollary 6.2]{Kwasniewski86}.
\begin{lemma} \label{lem:Tucprod} Let $G$ be an amenable group with a twisted action on a Type I C*-algebra $B$.  Then the  twisted crossed product of $B$ with $G$ satisfies the UCT.
\end{lemma}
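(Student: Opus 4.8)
The plan is to run the proof of \cite[Theorem 3.1]{Barlak17} essentially unchanged, with the only modification being that the scalar coefficients appearing there are replaced by the Type I algebra $B$. The Barlak--Li exposition of Tu's argument uses exactly two inputs: the amenability of the acting object, and the fact that the algebra sitting over the unit space lies in the bootstrap class $\mathcal{N}$. Here the acting object is the discrete amenable group $G$, viewed as a groupoid over a single point, so the first input is immediate. For the second, every separable Type I C*-algebra lies in $\mathcal{N}$: it admits a composition series whose subquotients are of continuous trace, each continuous-trace subquotient lies in $\mathcal{N}$, and $\mathcal{N}$ is closed under extensions and countable inductive limits. Hence $B \in \mathcal{N}$, and the twist is carried through exactly as the twist $\Sigma$ is carried through in \cite{Barlak17}, with the group $2$-cocycle $\omega$ now coupled to the $B$-valued data.

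Concretely, I would first recall the skeleton of the Tu / Barlak--Li argument. Amenability of $G$ yields a-T-menability, hence a Dirac and dual-Dirac element whose Kasparov product $\gamma$ equals $1$; the resulting Dirac--dual-Dirac machinery reduces membership of the twisted crossed product in $\mathcal{N}$ to membership of the coefficient algebra in $\mathcal{N}$, together with closure of $\mathcal{N}$ under the operations (extensions, countable inductive limits, and stabilization) that assemble the algebra attached to the proper model. Wherever \cite{Barlak17} invoke that the unit-space algebra $C_0(\mathcal{G}^{(0)})$ is commutative and therefore trivially in $\mathcal{N}$, I would instead invoke that $B$ is Type I and therefore in $\mathcal{N}$. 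Tracking the single substitution $C_0(\mathcal{G}^{(0)}) \rightsquigarrow B$ through the argument then gives $B \rtimes_{\alpha,\omega} G \in \mathcal{N}$, i.e. the reduced twisted crossed product satisfies the UCT.

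I expect the main difficulty to be bookkeeping rather than conceptual. One must verify that every step of the cited proof phrased for a commutative fiber survives the passage to a noncommutative Type I fiber — in particular that the proper-action and continuous-field constructions, and the induction exhibiting bootstrap membership, remain compatible once the $B$-coefficients and the twist are present simultaneously. This is precisely where keeping the notation of \cite{Barlak17} is useful, since it lets one confirm that no property of the commutative coefficient beyond its lying in $\mathcal{N}$ is ever used. As a consistency check, the same statement is available as \cite[Corollary 6.2]{Kwasniewski86}, derived there from a more general theorem, so in the worst case the result can simply be quoted.
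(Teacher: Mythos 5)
Your overall architecture is the same as the paper's: reduce to Tu's Dirac--dual-Dirac machinery as exposed in \cite[Theorem 3.1]{Barlak17}, obtain that $B\rtimes G$ is $KK$-dominated by the crossed product of Tu's proper $G$-algebra $\mathcal{A}(H)$ tensored with $B$, and then dispose of the proper part. But there is a genuine gap in how you justify the final step. You assert that the only property of the coefficient algebra ever used in \cite{Barlak17} is membership in the bootstrap class $\mathcal{N}$, so that the substitution ``$C_0(\mathcal{G}^{(0)})$ commutative, hence in $\mathcal{N}$'' $\rightsquigarrow$ ``$B$ Type I, hence in $\mathcal{N}$'' carries the proof through. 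That is not the right substitution, and as stated the claim is false. After the $KK$-domination step you are not looking at the coefficient algebra but at the crossed products $(\mathcal{A}_\ve(H)\otimes B)\rtimes G$, where $\mathcal{A}(H)=\varinjlim \mathcal{A}_\ve(H)$ with each $\mathcal{A}_\ve(H)$ Type I and the $G$-action proper. What must be shown is that \emph{these crossed products} satisfy the UCT, and mere bootstrap membership of $B$ does not give that: there is no general principle that a crossed product of a proper action on a bootstrap-class algebra stays in the bootstrap class. What the argument actually uses is that $B$ is Type I, so that $\mathcal{A}_\ve(H)\otimes B$ is Type I, and then that crossed products of \emph{proper} actions on Type I algebras are again Type I --- this is \cite[Theorem 2.14]{Echterhoff11} --- whence the UCT follows from \cite{RosenbergSchochet1987UCT}. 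This is precisely the step you wave at as ``bookkeeping'' about ``the proper-action and continuous-field constructions,'' but it is the one place where a specific theorem beyond closure properties of $\mathcal{N}$ is needed, and your paragraph on $B\in\mathcal{N}$ via composition series is doing no work there.

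Two smaller points. First, the paper begins by invoking the Packer--Raeburn stabilization trick \cite[Theorem 3.4]{PackerRaeburn1989twisted} to replace the twisted action by a genuine action; you should do the same rather than trying to couple the group $2$-cocycle to the $B$-valued data throughout Tu's construction. Second, your fallback of quoting \cite[Corollary 6.2]{Kwasniewski86} is legitimate and is acknowledged in the paper, but it is a citation, not a proof.
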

\begin{proof}  By the Packer-Raeburn trick \cite[Theorem 3.4]{PackerRaeburn1989twisted} we can and do assume that the twisted action is a true action. 

Let $\mathcal{A}(H)$ be the proper $G$-C*-algebra constructed by Tu \cite{Tu99} and used in \cite[Theorem 3.1]{Barlak17}.  
For our proof we only need to know that
 $\mathcal{A}(H)$ is an inductive limit of Type I proper $G$-C*-algebras $\{\mathcal{A}_\ve(H)\}_{\ve>0}.$
By replacing the compact operators (denoted by \textbf{A} in \cite[Theorem 3.1]{Barlak17})\footnote{Note that in our special case of a group action and not a groupoid that \textbf{A} really is the compact operators} with the Type I C*-algebra $B$ we see that $B\rtimes G$ is $KK$-dominated (\cite[Definition 23.10.6]{Blackadar98}) by $(\mathcal{A}(H)\otimes B)\rtimes G.$ Hence (as in \cite[Theorem 3.1]{Barlak17}) we only need to show that $(\mathcal{A}(H)\otimes B)\rtimes G$ satisfies the UCT.  But $(\mathcal{A}(H)\otimes B)\rtimes G$ is an inductive limit of $(\mathcal{A}_\ve(H)\otimes B) \rtimes G$ hence we only need to show that each $(\mathcal{A}_\ve(H)\otimes B) \rtimes G$ satisfies the UCT.  But since $\mathcal{A}_\ve(H)\otimes B$ is Type I and the action is proper the crossed product is also Type I by \cite[Theorem 2.14]{Echterhoff11} and hence satisfies the UCT by \cite{RosenbergSchochet1987UCT}.
\end{proof}
\begin{theorem} \label{thm:UCTindtraces}  Let $G$ be an amenable group and $\tau$ a trace on $G.$ Let $N\trianglelefteq G$ be a virtually abelian normal subgroup such that $\tau(g)=0$ when $g\not\in N.$  Then $C^*_\tau(G)$ satisfies the UCT.
\end{theorem}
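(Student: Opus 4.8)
The plan is to reduce the theorem directly to Lemma \ref{lem:Tucprod} by exhibiting $C^*_\tau(G)$ as a twisted crossed product of a Type I C*-algebra by an amenable group. Since $\tau(g)=0$ for every $g\notin N$, the hypotheses of Remark \ref{rem:tracetotwist} are met, and that remark already furnishes the isomorphism
\begin{equation*}
C^*_\tau(G) \cong C^*_\tau(N)\rtimes_{\alpha,\omega} G/N,
\end{equation*}
where $(\alpha,\omega)$ is the twisted action of $G/N$ on $C^*_\tau(N)$ built from a choice of coset representatives. It therefore suffices to check that $C^*_\tau(N)$ is Type I and that $G/N$ is amenable, for then Lemma \ref{lem:Tucprod} applies verbatim and yields the UCT for the crossed product, hence for $C^*_\tau(G)$.

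The key step is the Type I property of $C^*_\tau(N)$, and this is where the virtual abelianness of $N$ enters. By Thoma's theorem a discrete group has Type I group C*-algebra precisely when it is virtually abelian, so $C^*(N)$ is Type I. Now $C^*_\tau(N)$ is by definition the C*-algebra generated by $\pi_\tau(N)$, i.e.\ the image $\pi_\tau(C^*(N))$ of the group C*-algebra under the GNS representation of $\tau|_N$; thus it is a quotient of $C^*(N)$. Since quotients of Type I C*-algebras are Type I, $C^*_\tau(N)$ is Type I, as required. The remaining hypothesis of Lemma \ref{lem:Tucprod} is immediate: $G$ is amenable by assumption, and $G/N$, being a quotient of an amenable group, is amenable.

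Assembling these observations, the twisted crossed product $C^*_\tau(N)\rtimes_{\alpha,\omega} G/N$ satisfies the UCT by Lemma \ref{lem:Tucprod}, and the isomorphism above transports this conclusion to $C^*_\tau(G)$. I expect no genuine obstacle beyond correctly identifying $C^*_\tau(N)$ as a quotient of the Type I algebra $C^*(N)$: the whole argument is an assembly of Remark \ref{rem:tracetotwist} and Lemma \ref{lem:Tucprod}, with virtual abelianness feeding in solely to supply the Type I input the lemma demands. This also clarifies how Theorem \ref{thm:UCTindtraces} genuinely weakens the hypotheses of Theorem \ref{thm:vnilcase}: ``finite dimensional'' is relaxed to ``Type I,'' which is exactly the level of generality Lemma \ref{lem:Tucprod} accommodates, at the cost of forfeiting the explicit twisted-group-C*-algebra decomposition.
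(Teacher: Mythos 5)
Your proof is correct and follows essentially the same route as the paper: decompose $C^*_\tau(G)$ as $C^*_\tau(N)\rtimes_{\alpha,\omega}G/N$ via Remark \ref{rem:tracetotwist} and then apply Lemma \ref{lem:Tucprod}. The only cosmetic difference is in justifying that $C^*_\tau(N)$ is Type I: the paper notes directly that it is subhomogeneous, whereas you invoke the (easy direction of) Thoma's theorem plus the fact that quotients of Type I C*-algebras are Type I --- both are valid.
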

\begin{proof} Since $N$ is virtually abelian, $C^*_\tau(N)$ is subhomogeneous and hence Type I.  The conclusion then follows by Lemma \ref{lem:Tucprod} and Remark \ref{rem:tracetotwist}.
\end{proof}

\begin{definition}\label{def:trivext} Let $G$ be a group and $N\trianglelefteq G$ be a normal subgroup.  Let $\tau$ be a $G$-invariant trace on $N.$  The \textbf{trivial extension of }$\tau$ is defined as 
\begin{equation*}
\tilde \tau(g) = \left\{ \begin{array}{ll}\tau(g) & \text{ if }g\in N\\
                                                           0 & \text{ if }g\notin N \end{array}\right.
\end{equation*}
Since $\tau$ is $G$-invariant it follows that $\tilde \tau$ is a trace on $G.$
\end{definition}
\begin{lemma}\label{lem:trivialextsuffices} Let $G$ be a virtually nilpotent group and $N\trianglelefteq G$ a finite index subgroup.  Let $\tau$ be a character on $G.$  Let $\phi$ be the trivial extension of $\tau|_N$ to $G.$  If $C^*_\phi(G)$ satisfies the UCT, then so does $C^*_\tau(G).$ 
\end{lemma}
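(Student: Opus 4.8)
The plan is to realize $C^*_\tau(G)$ as a \emph{direct summand} of $C^*_\phi(G)$ and then invoke the fact that the UCT passes to direct summands (since $K$-theory and the six-term UCT sequence split over finite direct sums, so the UCT for $C^*_\phi(G)$ forces it for each summand). The first step is to recognize $\pi_\phi$ as an induced representation. A direct GNS computation, using that $\phi$ vanishes off $N$ and agrees with $\tau$ there, shows $\pi_\phi \cong \operatorname{Ind}_N^G \pi_{\tau|_N}$; equivalently, since $[G:N]<\infty$ and $\phi(g)=0$ for $g\notin N$, Remark \ref{rem:tracetotwist} gives $C^*_\phi(G)\cong C^*_{\tau|_N}(N)\rtimes_{\alpha,\omega}G/N$ with $G/N$ \emph{finite}. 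Note $\tau|_N$ is invariant under $G$-conjugation because $\tau$ is a class function on $G$.

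Next I would record that $\tau$ descends to a trace on $C^*_\phi(G)$. Indeed, one checks that $\pi_\tau|_N$ and $\pi_{\tau|_N}$ are weakly equivalent — each cyclic piece $\pi_\tau(g)\overline{\pi_\tau(N)\xi_\tau}$ carries a copy of $\pi_{\tau|_N}$, using the $G$-invariance of $\tau|_N$ — so Fell's continuity of induction yields $\pi_\tau\prec\operatorname{Ind}_N^G\pi_{\tau|_N}=\pi_\phi$, i.e.\ $\ker\pi_\phi\subseteq\ker\pi_\tau$. Thus $C^*_\tau(G)$ is at least a \emph{quotient} of $C^*_\phi(G)$, and $\tau$ is one of the traces on $C^*_\phi(G)$. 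Because $\pi_\tau(N)''\subseteq\pi_\tau(G)''$ has finite index, $\pi_\tau(N)''$ has finite-dimensional center, so $\tau|_N$ is a finite sum of characters $\psi_1,\dots,\psi_m$ of $N$ forming a single $G$-orbit, and correspondingly $\phi$ is a finite sum of characters $\rho_1=\tau,\rho_2,\dots,\rho_k$ of $G$.

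The heart of the argument is to upgrade this to a genuine C*-algebraic splitting $C^*_\phi(G)\cong\bigoplus_{j=1}^k C^*_{\rho_j}(G)$, which exhibits $C^*_\tau(G)=C^*_{\rho_1}(G)$ as a direct summand. To do this I would apply Proposition \ref{prop:transitive} to the finite action of $G/N$ permuting the orbit $\{\psi_i\}$: letting $S$ be the inertia group of $\psi_1$, this peels off a matrix factor $M_m$ ($m=[G:S]$) and reduces matters to the case where $\tau|_N=\psi_1$ is a single $G$-invariant character, so $C^*_{\psi_1}(N)$ has no nontrivial central projections. In that reduced situation the characters of $G$ over $\psi_1$ differ only through the finite group $S/N$, and — as in the proof of Proposition \ref{prop:inner} — the inner part of the $S/N$-action on the factor $C^*_{\psi_1}(N)$ is governed by a 2-cocycle on a finite group, whose twisted group C*-algebra is a finite direct sum of matrix algebras; its minimal central projections, built from the implementing unitaries, actually lie in $C^*_\phi(G)$ and produce the desired splitting into the $C^*_{\rho_j}(G)$.

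The main obstacle is exactly this third step: passing from the von Neumann–algebraic central decomposition of $\pi_\phi(G)''$ (automatic once there are finitely many characters) to an honest C*-algebraic direct sum. Weak containment alone only yields that $C^*_\tau(G)$ is a quotient of $C^*_\phi(G)$, and the UCT does \emph{not} descend to arbitrary quotients (cf.\ the elementary example $\mathbb{C}\cong C^*_\tau(\Z/2\Z)$ sitting inside $\mathbb{C}^2\cong C^*_\phi(\Z/2\Z)$, where the splitting does hold but for nontrivial reasons). The crux is therefore to show the relevant minimal central projections genuinely lie in $C^*_\phi(G)$; here the finiteness of $G/N$ and the factoriality of the pieces $C^*_{\psi_i}(N)$ are what make Propositions \ref{prop:transitive} and \ref{prop:inner} applicable and force the decomposition.
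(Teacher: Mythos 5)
Your overall strategy coincides with the paper's: exhibit $C^*_\tau(G)$ as a direct summand of $C^*_\phi(G)$ and use that the UCT passes to direct summands. The paper, however, does not prove the splitting at all; it simply quotes \cite[Proposition 4.4]{EckhardtGillaspyMcKenney2019Finite} for $C^*_\phi(G)\cong C^*_\tau(G)\oplus B$ and then applies \cite[Proposition 23.10.5]{Blackadar98}. So the substance of your proposal is an attempted proof of that cited proposition, and it is exactly there that the argument has a genuine gap.

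The gap is the one you yourself flag as ``the crux'': producing the relevant minimal central projections inside the C*-algebra $C^*_\phi(G)$ rather than merely in the von Neumann algebra $\pi_\phi(G)''$. After the (correct) reductions via Remark \ref{rem:tracetotwist} and Proposition \ref{prop:transitive}, you are left with $C^*_{\psi_1}(N)\rtimes_{\tilde\alpha,\tilde\omega} S/N$ and you invoke Proposition \ref{prop:inner} to split it. But Proposition \ref{prop:inner} requires the twisted action to be \emph{pointwise inner}, and membership of $s$ in the inertia group only tells you that $\alpha_s$ fixes the character $\psi_1$, i.e.\ preserves the unique trace of the simple C*-algebra $C^*_{\psi_1}(N)$; a trace-preserving automorphism of a simple C*-algebra (e.g.\ the flip on an irrational rotation algebra) need not be inner. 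Restricting to the subgroup $S_0/N$ of elements that genuinely act by C*-inner automorphisms does yield central projections, but you must then show this subgroup is large enough --- that it coincides with the subgroup acting by inner automorphisms of the factor $\pi_{\psi_1}(N)''$, since that is what governs how $\phi$ decomposes into the characters $\rho_j$. If the von Neumann--inner subgroup were strictly larger, the C*-central decomposition would be coarser than the trace decomposition and $C^*_\tau(G)$ would be a quotient by a non-complemented ideal rather than a direct summand. Your closing assertion that finiteness of $G/N$ and factoriality ``force the decomposition'' is the statement to be proved, not an argument; it is precisely the content of \cite[Proposition 4.4]{EckhardtGillaspyMcKenney2019Finite}. (The preliminary steps --- $\tau\le [G:N]\phi$, so that $C^*_\tau(G)$ is a quotient of $C^*_\phi(G)$, and the transitivity and triviality-of-center facts needed for Proposition \ref{prop:transitive} --- are fine, though the latter should be attributed to simplicity of each $C^*_{\psi_i}(N)$ from the nilpotent-case results rather than to ``factoriality.'')
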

\begin{proof} By  \cite[Proposition 4.4]{EckhardtGillaspyMcKenney2019Finite}, we have $C^*_\phi(G)\cong C^*_\tau(G)\oplus B$ for some C*-algebra $B$\footnote{A better description of $B$ is given in \cite[Proposition 4.4]{EckhardtGillaspyMcKenney2019Finite} but it doesn't concern us here}.  The conclusion now follows from \cite[Proposition 23.10.5]{Blackadar98}.
\end{proof}
\begin{remark} In general it is not the case that $\tau=\phi$ in Lemma \ref{lem:trivialextsuffices}.  It is preferable to work with traces that vanish on large parts of groups because we can employ the machinery of twisted crossed products.  With regards to the UCT Lemma \ref{lem:trivialextsuffices} provides that flexibility.
\end{remark}
\begin{remark} \label{rem:FCcenter} Let $G$ be a group.  The \textbf{FC-center} of $G$,  denoted $\text{FC}(G)$ is the collection of all elements of $G$ with finite conjugacy classes. 
Let now $N$ be finitely generated and nilpotent. Then $\text{FC}(N)$ contains $Z(N)$ as a finite index subgroup (see e.g. \cite[Lemma 2.3]{EckhardtGillaspy2016Irreducible}).  Hence if  $\tau$ is a character on $N$ then $\pi_\tau(\text{FC}(N))$ is finite dimensional. By \cite[Lemma 2.4]{EckhardtGillaspy2016Irreducible}, the quotient $N/\text{FC}(N)$ is torsion free.
\end{remark}

\begin{corollary}\label{cor:RR0nilpotent} Let $N$ be a finitely generated nilpotent group and $\pi$ an irreducible representation of $N$.  Then there is a finitely generated torsion free group  $H$,  a $\sigma\in Z^2(H,\T)$ and an $n\geq1$ such that
\begin{equation*}
C^*_\pi(N) \cong M_n\otimes C_r^*(H,\sigma).
\end{equation*}
If $H$ is nontrivial, then $C^*_\pi(H,\sigma)$ contains an irrational rotation algebra and has real rank 0.  If $H$ is trivial, then $C^*_\pi(N)$ (trivially) has real rank 0.
\end{corollary}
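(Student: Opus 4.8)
The plan is to realize $C^*_\pi(N)$ as a twisted crossed product to which Theorem \ref{thm:vnilcase} applies, and then to read off both the stated decomposition and the finer structure from simplicity. Let $\tau$ be the character of $N$ with $C^*_\pi(N)\cong C^*_\tau(N)$ supplied by the trace--representation correspondence, and note that $\tau$ factors through $N/\ker\tau$, which is again finitely generated nilpotent; replacing $N$ by this quotient I may assume $\tau$ is faithful. I would then take $M=\mathrm{FC}(N)$ to be the FC-center. By Remark \ref{rem:FCcenter} the algebra $C^*_\tau(M)$ is finite dimensional and $N/M$ is torsion free, while the structure theory of faithful characters of finitely generated nilpotent groups (\cite{Carey84}; cf. the discussion in the introduction and \cite{EckhardtGillaspy2016Irreducible}) gives $\tau(g)=0$ for $g\notin M$. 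Thus the hypotheses of Theorem \ref{thm:vnilcase} hold for the pair $M\trianglelefteq N$, producing an isomorphism $C^*_\tau(N)\cong\bigoplus_{i=1}^m M_{k_i}\otimes C^*_r(H_i,\sigma_i)$ with each $H_i\le N/M$ of finite index. Since $\pi$ is irreducible, $C^*_\pi(N)$ is simple (\cite{EckhardtGillaspy2016Irreducible}), so exactly one summand survives: $C^*_\pi(N)\cong M_n\otimes C^*_r(H,\sigma)$ with $H\le N/M$ of finite index. As a finite-index subgroup of the finitely generated torsion-free nilpotent group $N/M$, the group $H$ is itself finitely generated and torsion free, which establishes the first assertion.

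Assume now $H\neq\{e\}$. Because $M_n\otimes C^*_r(H,\sigma)$ is simple, so is $C^*_r(H,\sigma)$, and in particular its center is $\C$. Since $H$ is a nontrivial nilpotent group its center $Z(H)$ is nontrivial, and I would exploit the bicharacter $\beta_z(h)=\sigma(z,h)\overline{\sigma(h,z)}$ attached to $z\in Z(H)$: a short computation shows that $\lambda_z$ is central in $C^*_r(H,\sigma)$ precisely when $\beta_z\equiv1$, and that both $z\mapsto\beta_z$ and $h\mapsto\beta_z(h)$ are homomorphisms. Fix $z\in Z(H)\setminus\{e\}$. Simplicity forbids $\lambda_z$ (a nonscalar unitary, as $H$ is torsion free) from being central, so $\beta_z\not\equiv1$; moreover, if the image of $\beta_z$ were finite, say contained in the $q$-th roots of unity, then $\beta_{z^q}\equiv1$ would make the nonscalar unitary $\lambda_{z^q}$ central, again contradicting simplicity. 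Hence some $h\in H$ satisfies $\beta_z(h)=e^{2\pi i\theta}$ with $\theta$ irrational. As $z$ is central, $z$ and $h$ commute and, being of infinite order and independent (note $\beta_z$ is trivial on $\langle z\rangle$), generate a copy of $\Z^2$; the corresponding unitaries satisfy $\lambda_z\lambda_h=e^{2\pi i\theta}\lambda_h\lambda_z$ and generate an irrational rotation algebra $A_\theta\cong C^*_r(\langle z,h\rangle,\sigma)\subseteq C^*_r(H,\sigma)$.

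Finally, for real rank $0$ I would pass through the trace. The algebra $C^*_\pi(N)$ is classifiable (hence nuclear, simple and $\mathcal Z$-stable) with a unique trace (\cite{EckhardtGillaspy2016Irreducible}), and for such algebras real rank $0$ is equivalent to density of the image of $K_0$ in $\R$ under the tracial state. The canonical trace of $C^*_r(H,\sigma)$ restricts to the unique trace of the subalgebra $A_\theta$, whose $K_0$ maps onto the dense subgroup $\Z+\Z\theta\subseteq\R$; transporting this through the trace-compatible inclusions into $C^*_r(H,\sigma)$ and then into $M_n\otimes C^*_r(H,\sigma)$ shows that the trace of $C^*_\pi(N)$ has dense range on $K_0$, so $C^*_\pi(N)$ has real rank $0$. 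When $H=\{e\}$ we simply have $C^*_\pi(N)\cong M_n$, which has real rank $0$ trivially.

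The main obstacle I anticipate is the real-rank-$0$ step: the group-theoretic input (vanishing of $\tau$ off the FC-center) and the irrationality argument are either citable or elementary, but extracting real rank $0$ requires the full force of $\mathcal Z$-stability together with the characterization of real rank $0$ via the tracial image of $K_0$, and one must verify carefully that the irrational rotation subalgebra injects enough of $\Z+\Z\theta$ into the $K_0$ of the ambient algebra.
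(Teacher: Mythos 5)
Your argument is correct and follows essentially the same route as the paper: reduce to a faithful character, use Carey--Moran to see the trace vanishes off the FC-center, apply Theorem \ref{thm:vnilcase} together with simplicity to get $M_n\otimes C^*_r(H,\sigma)$ with $H\leq N/\mathrm{FC}(N)$ torsion free, and then exhibit an irrational rotation subalgebra to feed into R\o rdam's $K_0$-density criterion for real rank $0$. Your bicharacter $\beta_z$ is exactly the paper's scalar commutator $[\lambda_g,\lambda_h]$, and the one step you should make explicit---that an infinite image of the homomorphism $\beta_z:H\to\T$ must contain an element of infinite order---is precisely where finite generation of $H$ is needed (the paper achieves the same thing via the finiteness of the torsion subgroup of the finitely generated nilpotent group $\langle \lambda_h : h\in H\rangle$).
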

\begin{proof} Let $\tau$ be an extreme trace on $N$ such that $C^*_\tau(N)\cong C^*_\pi(N).$  By replacing $N$ with $N/\ker (\tau)$ we may assume that $\tau$ is faithful on $N.$ By \cite[Theorem 4.5]{Carey84} we have $\tau(g)=0$ for all $g\not\in \text{FC}(N).$  By Remark \ref{rem:FCcenter} we have that $C^*_\tau(\text{FC}(N))$ is finite dimensional and $N/\text{FC}(N)$ is torsion free.  Since $C^*_\pi(N)$ is simple, by Theorem \ref{thm:vnilcase} there is a subgroup $H\leq N/\text{FC}(N)$, a $\sigma\in Z^2(H,\T)$ and an $n\geq 1$ so $C^*_\pi(N)\cong M_n\otimes C^*_r(H,\sigma).$

Suppose now that $H$ is non-trivial.  By \cite{EckhardtMcKenney2018Finitely}, $C^*_r(H,\sigma)$ has a unique trace and is $\mathcal{Z}$-stable.  Therefore by \cite[Corollary 7.3]{Rordam04} it suffices to show that the range of the trace $K_0(\tau)$ is dense in $\R.$  Since the range of the trace on an irrational rotation algebra is dense in $\R$ (see \cite{Rieffel81}) it suffices to show that $C^*_r(H,\sigma)$ contains an irrational rotation algebra.

Fix $g\in Z(H)\setminus\{ e \}.$   Let $\tilde H = \la \lambda_h:h\in H  \ra.$  Then $\tilde H$ is finitely generated and a central extension of $H$ and hence nilpotent.   Therefore the torsion subgroup of $\tilde H$ is finite\footnote{We remark that the torsion subgroup is contained in $\T$}. 
Let $m$ be the order of the torsion subgroup of $\tilde{H}$

For any $h\in H,$ the group commutator $[\lambda_g,\lambda_h]=\lambda_g^{-1}\lambda_h^{-1}\lambda_g\lambda_h\in \T$ because $g$ is central in $H.$ Suppose that for all $h\in H$, the commutator $[\lambda_g,\lambda_h]$ has finite order.  Then $[\lambda_g^m,\lambda_h] = [\lambda_g,\lambda_h]^m=1$ for all $h\in H.$  Since $H$ is torsion free this shows that $\lambda_{g}^m$ is a non-trivial element of the center of $C^*_r(H,\sigma)$--a contradiction. Hence there is some $x\in H$ so $[\lambda_g,\lambda_x]$ has infinite order and therefore $C^*(\lambda_g,\lambda_x)$ is isomorphic to an irrational rotation algebra.
\end{proof}
\begin{corollary}\label{cor:RR0vniltwist} Let $G$ be a finitely generated virtually nilpotent group and $\sigma\in Z^2(G,\T)$ such that $C^*_r(G,\sigma)$ is simple.  Then $C^*_r(G,\sigma)$ has real rank 0.
\end{corollary}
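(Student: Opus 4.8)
The plan is to mirror the proof of Corollary~\ref{cor:RR0nilpotent}. Write $A:=C^*_r(G,\sigma)$ and let $\tau_0$ denote its canonical trace, so $\tau_0(\lambda_g)=0$ for $g\neq e$. If $A$ is finite dimensional then, being simple, it is a full matrix algebra and has real rank $0$ trivially, so I assume throughout that $A$ is infinite dimensional. Since $G$ is virtually nilpotent it is amenable, hence $A$ is unital, separable and nuclear, and by hypothesis it is simple. I will verify that $A$ is $\mathcal{Z}$-stable and has a unique trace, and then exhibit an irrational rotation subalgebra in order to conclude real rank $0$ from \cite[Corollary 7.3]{Rordam04}, exactly as in Corollary~\ref{cor:RR0nilpotent}.

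For the soft inputs, the simplicity of the amenable twisted group C*-algebra $A$ forces the twisted Kleppner condition---every nontrivial $\sigma$-regular conjugacy class of $G$ is infinite---so that $\pi_{\tau_0}(A)''$ is a factor and $\tau_0$ is the unique trace of $A$; this is the twisted analogue of the uniqueness statement used for torsion free nilpotent groups in \cite{EckhardtMcKenney2018Finitely}. Finite nuclear dimension of $A$ is supplied by \cite{EckhardtGillaspyMcKenney2019Finite} (these algebras $C^*_r(G,\sigma)$ are precisely the simple summands produced in Theorem~\ref{thm:vnilcase}), whence $A$ is $\mathcal{Z}$-stable by Winter's theorem. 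By \cite[Corollary 7.3]{Rordam04} it now suffices to show that $\tau_0(K_0(A))$ is dense in $\R$. I expect the uniqueness of trace, i.e.\ that simplicity forces the twisted Kleppner condition, to be the main obstacle, since it is what makes both the applicability of \cite[Corollary 7.3]{Rordam04} and the construction below go through; Example~\ref{ex:nonfaithful}-type phenomena warn that the tracial structure in the virtually nilpotent case is delicate.

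The remaining task is to find commuting $g,x\in G$ whose scalar commutator $\beta(g,x):=\lambda_g\lambda_x\lambda_g^{-1}\lambda_x^{-1}=\sigma(g,x)\overline{\sigma(x,g)}\in\T$ has infinite order. Granting this, the relation $\lambda_g\lambda_x=\beta(g,x)\lambda_x\lambda_g$ gives a nonzero representation of the simple irrational rotation algebra $A_\theta$ with $e^{2\pi i\theta}=\beta(g,x)$, so $C^*(\lambda_g,\lambda_x)\cong A_\theta$; since $\tau_0$ restricts to the canonical trace of $A_\theta$, Rieffel's computation \cite{Rieffel81} yields $\tau_0(K_0(A))\supseteq\Z+\theta\Z$, which is dense. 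To produce such a pair, fix a finite index normal nilpotent subgroup $N_0\trianglelefteq G$; as $A$ is infinite dimensional, $N_0$ is infinite, hence $Z(N_0)$ is infinite, and each $g\in Z(N_0)$ satisfies $C_G(g)\supseteq N_0$, so $g\in\text{FC}(G)$. Suppose, for contradiction, that $\beta(g,x)$ is a root of unity for every commuting pair. Choose $g\in Z(N_0)$ of infinite order. As $G$ is polycyclic-by-finite, hence Noetherian, the ascending chain $(C_G(g^j))_{j\geq1}$ stabilises, so after replacing $g$ by a power we may assume $C_G(g^j)=C_G(g)$ for all $j\geq1$. The homomorphism $\beta(g,\cdot)\colon C_G(g)\to\T$ has torsion image, and since $C_G(g)$ is finitely generated this image is finite, say $\beta(g,\cdot)^{m}\equiv1$. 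Because $C_G(g^m)=C_G(g)$, the element $g^m$ is then a nontrivial ($g$ has infinite order) $\sigma$-regular element of $\text{FC}(G)$, contradicting the twisted Kleppner condition established in the previous paragraph. Hence some $\beta(g,x)$ has infinite order, which completes the proof.
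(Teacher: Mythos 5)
Your proof is correct, but it reaches the irrational rotation subalgebra by a genuinely different route than the paper. The paper restricts to a finite index nilpotent subgroup $N\trianglelefteq G$, invokes \cite[Lemmas 3.5, 3.6]{EckhardtGillaspyMcKenney2019Finite} to write $C^*_r(N,\sigma)$ as a direct sum of simple quotients of $C^*(N)$, and applies Corollary \ref{cor:RR0nilpotent} to an infinite dimensional summand to obtain a (possibly non-unital) copy of an irrational rotation algebra; the endgame (unique trace, $\mathcal{Z}$-stability, \cite[Corollary 7.3]{Rordam04}) is the same as yours. You instead work entirely inside $G$: simplicity forces the twisted Kleppner condition, and your Noetherian/finite-generation argument manufactures a commuting pair $g,x$ with $\beta(g,x)$ of infinite order, hence a \emph{unital} copy of $A_\theta$. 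Your route is more self-contained at the group level---it bypasses Carey--Moran and the FC-center machinery for this step and isolates exactly which group-theoretic inputs are used---at the mild cost of verifying that simplicity implies the Kleppner condition (easy: a finite nontrivial $\sigma$-regular class yields a non-scalar central element). The one place you compress the logic is the claim that the Kleppner condition yields uniqueness of the trace: factoriality of $\pi_{\tau_0}(A)''$ does not by itself give unique trace of the C*-algebra, and the uniqueness you need is precisely the external fact the paper cites from \cite{EckhardtGillaspyMcKenney2019Finite}, so you should cite it the same way. Two cosmetic points: the family $\left( C_G(g^j)\right)_{j\geq 1}$ is directed under divisibility of $j$ rather than being an ascending chain in $j$, though the max condition still gives stabilization along multiples; and the parenthetical identifying all simple $C^*_r(G,\sigma)$ with the summands produced in Theorem \ref{thm:vnilcase} is not literally accurate, but nothing in your argument depends on it.
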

\begin{proof} We prove the non-trivial case, where  $C^*_r(G,\sigma)$ is infinite dimensional. Let $N\trianglelefteq G$  be a finite index nilpotent subgroup. By \cite[Lemmas 3.5, 3.6]{EckhardtGillaspyMcKenney2019Finite}, $C_r^*(N,\sigma)$ is a direct sum of simple quotients of $C^*(N).$ It follows from Corollary \ref{cor:RR0nilpotent} that $C^*_r(G,\sigma)$ contains a (possibly non-unital) copy of an irrational rotation algebra.  Since $C^*_r(G,\sigma)$ has a unique trace and is $\mathcal{Z}$-stable (\cite{EckhardtGillaspyMcKenney2019Finite}) real rank 0 follows by \cite[Corollary 7.3]{Rordam04} as in the proof of Corollary \ref{cor:RR0nilpotent}.
\end{proof}
\begin{theorem} Let $G$ be a finitely generated virtually nilpotent group and $\pi$ an irreducible representation of $G.$  Then $C^*_\pi(G)$--the C*-algebra generated by $\pi(G)$-- is classifiable, has a unique trace and real rank 0.
\end{theorem}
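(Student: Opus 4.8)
The plan is to reduce the statement to the three properties not yet in hand---the UCT, real rank $0$, and uniqueness of trace---since separability is automatic ($G$ is finitely generated), nuclearity and simplicity follow from virtual nilpotence of $G$ together with irreducibility of $\pi$ (\cite{Echterhoff90}), and finite nuclear dimension is the main result of \cite{EckhardtGillaspyMcKenney2019Finite}. First I would pass to tracial language: $C^*_\pi(G)$ is a simple nuclear C*-algebra carrying a trace, so $\pi$ corresponds to a character $\tau$ on $G$ with $C^*_\pi(G)\cong C^*_\tau(G)$, and after passing to $G/\ker\tau$ I may assume $\tau$ is faithful (the quotient is still finitely generated virtually nilpotent). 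I then fix a finite-index nilpotent normal subgroup $N_0\trianglelefteq G$ (the normal core of a finite-index nilpotent subgroup). By Lemma \ref{lem:trivialextsuffices} it suffices to prove the UCT for $C^*_\phi(G)$, where $\phi$ is the trivial extension of $\tau|_{N_0}$; and by Remark \ref{rem:tracetotwist} this is $C^*_{\tau|_{N_0}}(N_0)\rtimes_{\alpha,\omega}G/N_0$ with $G/N_0$ finite.

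The hard part is the UCT, and the obstacle is exactly the phenomenon in Example \ref{ex:nonfaithful}: $\tau|_{N_0}$ need not vanish off any virtually abelian (or finite-dimensional) normal subgroup, so neither Theorem \ref{thm:vnilcase} nor Theorem \ref{thm:UCTindtraces} applies to $G$ in one stroke. My way around this is to exploit finiteness of $[G:N_0]$. Because the center of $\pi_\tau(N_0)''$ is finite-dimensional, the restriction $\tau|_{N_0}=\sum_k\lambda_k\tau_k$ is a finite convex combination of characters $\tau_k$ of the nilpotent group $N_0$. Each $C^*_{\tau_k}(N_0)$ is simple (Corollary \ref{cor:RR0nilpotent}) with a unique trace, so distinct $\tau_k$ have distinct, pairwise comaximal kernels and $C^*_{\tau|_{N_0}}(N_0)\cong\bigoplus_k C^*_{\tau_k}(N_0)$, a finite direct sum of algebras each of the form $M_{n_k}\otimes C^*_r(H_k,\sigma_k)$ satisfying the UCT.

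Now $G/N_0$ acts by a twisted action permuting these simple summands (it permutes the $G$-conjugate characters $\tau_k$). Since the summands have no nontrivial central projections, Proposition \ref{prop:transitive} decomposes $C^*_\phi(G)$, orbit by orbit, into a direct sum of algebras $M_r\otimes\bigl(C^*_{\tau_{k_0}}(N_0)\rtimes \mathrm{Stab}(\tau_{k_0})\bigr)$. Writing $\tilde N\leq G$ for the preimage of $\mathrm{Stab}(\tau_{k_0})$, each such crossed product is $C^*_{\tilde\phi}(\tilde N)$ for the trivial extension $\tilde\phi$ of $\tau_{k_0}$ to $\tilde N$. The crucial gain is that $\tau_{k_0}$ is now $\tilde N$-invariant, so Carey's theorem \cite[Theorem 4.5]{Carey84} applied to the character $\tau_{k_0}$ of $N_0$ (after quotienting by its kernel), together with Remark \ref{rem:FCcenter}, produces a normal subgroup $L\trianglelefteq\tilde N$ with $C^*_{\tilde\phi}(L)$ finite-dimensional and $\tilde\phi$ vanishing off $L$. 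Theorem \ref{thm:vnilcase} then yields the UCT for $C^*_{\tilde\phi}(\tilde N)$, and hence for $C^*_\phi(G)$ and $C^*_\tau(G)\cong C^*_\pi(G)$.

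For the remaining two properties I would invoke prior work and reuse the mechanism already present in the corollaries. Uniqueness of trace and $\mathcal{Z}$-stability are established in \cite{EckhardtGillaspyMcKenney2019Finite}, so by \cite[Corollary 7.3]{Rordam04} real rank $0$ will follow once the range of the trace on $K_0(C^*_\pi(G))$ is dense in $\R$. Assuming $C^*_\pi(G)$ is infinite dimensional, I restrict $\pi$ to $N_0$; since $[G:N_0]<\infty$ this restriction has an infinite-dimensional irreducible summand, which by Corollary \ref{cor:RR0nilpotent} generates, in a corner of $C^*_\pi(G)$, a copy of $M_n\otimes C^*_r(H,\sigma)$ with $H$ nontrivial and hence an irrational rotation algebra whose trace values are dense. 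This forces the trace range of $C^*_\pi(G)$ to be dense, exactly as in the proof of Corollary \ref{cor:RR0vniltwist}. Assembling the UCT with finite nuclear dimension, simplicity, nuclearity and separability gives classifiability, completing the proof.
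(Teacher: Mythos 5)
Your proposal is correct and follows essentially the same route as the paper: reduce via Lemma \ref{lem:trivialextsuffices} to the trivial extension, decompose $C^*_\tau(N)$ into simple summands indexed by characters of $N$, use Proposition \ref{prop:transitive} to pass to the stabilizer of one summand, apply Carey--Moran and Remark \ref{rem:FCcenter} to that single character to land in the hypotheses of Theorem \ref{thm:vnilcase}, and obtain real rank $0$ from unique trace, $\mathcal{Z}$-stability, and the dense trace range of an embedded irrational rotation algebra. The only differences are presentational (you phrase the stabilizer crossed product as $C^*_{\tilde\phi}(\tilde N)$ for the preimage $\tilde N\leq G$, where the paper iterates Remark \ref{rem:tracetotwist} through the quotients $\overline H/\mathrm{FC}(\overline N)$), and these do not change the substance of the argument.
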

\begin{proof} There is a trace $\tau$ on $G$ such that $C^*_\tau(G)\cong C^*_\pi(G),$ that $C^*_\tau(G)$ is simple and $\tau$ is the unique trace on $C^*_\tau(G)$ (see e.g. \cite[Section 2]{EckhardtGillaspyMcKenney2019Finite}). By the main theorem of  \cite{EckhardtGillaspyMcKenney2019Finite}, $C^*_\tau(G)$ has finite nuclear dimension.  We show it satisfies the UCT and has real rank 0.  

Let $N\trianglelefteq G$ be a normal nilpotent subgroup of finite index.
Let $\phi$ be the trivial extension of $\tau|_N$ to $G.$  By Lemma \ref{lem:trivialextsuffices} it suffices to show that $C^*_\phi(G)$ satisfies the UCT.  Since $\phi(g)=0$ for all $g\not\in N$, by Remark \ref{rem:tracetotwist} we have
\begin{equation*}
C^*_\phi(G)\cong C^*_\tau(N)\rtimes_{\alpha,\omega} G/N
\end{equation*}
By \cite[Lemmas 3.5, 3.6]{EckhardtGillaspyMcKenney2019Finite} we have 
\begin{equation*}
C^*_\tau(N) \cong \bigoplus_{i=1}^n C^*_{\tau_i}(N)
\end{equation*}
where each $\tau_i$ is a (not necessarily $G$-invariant) character on $N.$ 

The action of $G$ on the minimal central projections of $C^*_\tau(N)$ may not be transitive. But since the UCT is preserved by finite direct sums, by considering each orbit separately we may assume the action is transitive. Moreover since $C^*_\pi(G)$ is a central cutdown of $C^*_\phi(G)$ (see \cite[Proposition 4.4]{EckhardtGillaspyMcKenney2019Finite}) assuming the action is transitive does not interfere with showing real rank 0.

Let $H/N\leq G/N$ be the stabilizer of $p_1.$  By Proposition \ref{prop:transitive} we have
\begin{equation*}
C^*_\phi(G) \cong M_n\otimes (C^*_{\tau_1}(N)\rtimes_{\tilde\alpha,\tilde\omega} H/N).
\end{equation*}
So it suffices to show that 
$C^*_{\tau_1}(N)\rtimes_{\tilde\alpha,\tilde\omega} H/N$ satisfies the UCT and has real rank 0.
\newline
To avoid notational nightmares we write 
\begin{equation*}
\overline{K} = K/\ker(\tau_1) 
\end{equation*}
for any group $K$ that normalizes $\ker(\tau_1).$ Recall that $\tau_1$ descends to a trace on $\overline{N}$ and we trivially have $C^*_{\tau_1}(N)\cong C^*_{\tau_1}(\overline N).$

Let $g\in \overline N\setminus \text{FC}(\overline N).$  By \cite[Theorem 4.5]{Carey84} we have $\tau_1(g)=0.$  Hence by Remark \ref{rem:tracetotwist} we have
\begin{equation*}
C^*_{\tau_1}(\overline N) \cong C^*_{\tau_1}(\text{FC}(\overline N)) \rtimes_{\alpha',\omega'} \overline N/\text{FC}(\overline N).
\end{equation*}
for a twisted action $(\alpha',\omega').$ Since $H$ stabilizes $C^*_{\tau_1}(N)$ it follows that $\ker(\tau_1)$ is a normal subgroup of $H,$ hence $H/N\cong \overline{H}/\overline{N}$ producing the isomorphism
\begin{equation*}
C^*_{\tau_1}(\overline N)\rtimes_{\tilde\alpha,\tilde\omega} H/N \cong C^*_{\tau_1}(\overline N) \rtimes_{\tilde\alpha,\tilde\omega} \overline{H}/\overline{N}
\end{equation*}  
where we abuse notation with $(\tilde\alpha,\tilde\omega).$ Since $\text{FC}(\overline N)$ is  characteristic  we also have that $\text{FC}(\overline N)$ is normal in $\overline H.$ Hence the previous two isomorphisms combine with Remark \ref{rem:tracetotwist} to show  
\begin{equation*}
C^*_{\tau_1}(\overline N)\rtimes_{\tilde\alpha,\tilde\omega} H/N \cong C^*_{\tau_1}(\text{FC}(\overline N)) \rtimes_{\alpha'',\omega''} \overline{H}/\text{FC}(\overline{N})
\end{equation*}
for a twisted action $(\alpha'',\omega'').$ 
By Remark \ref{rem:FCcenter}, we have $C^*_{\tau_1}(\text{FC}(\overline N))$ is finite dimensional so the UCT follows from Theorem \ref{thm:vnilcase}.
Finally, real rank 0  follows from Theorem \ref{thm:vnilcase} and Corollary \ref{cor:RR0vniltwist}.
\end{proof}
\subsection{Cartan subalgebras} We construct ``explicit" Cartan subalgebras inside of C*-algebras generated by irreducible representations of finitely generated nilpotent groups.  We refer the reader to Renault's paper \cite{Renault08} for information on Cartan subalgebras and to the papers \cite{Barlak17,Li20} for their importance in the classification program.
\\\\
A group $G$ has the \textbf{unique roots property} if for any $x,y\in G$ and $n\geq 1$ we have $x^n = y^n$ implies  $x=y.$ By a classic result of Mal'cev (see \cite[Lemma 2.1]{Baumslag71Nilpotent}) every torsion free nilpotent group has the unique roots property.

\begin{lemma}\label{lem:uroots} Let $G$ be a  group with the unique roots property and $H\leq G.$   If $g\in G$ such that  the set $\{ h^{-1}gh:h\in H \}$ is finite, then $g$ is in the centralizer of $H.$  In particular this holds if $G$ is torsion free and nilpotent.
\end{lemma}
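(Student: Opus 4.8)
The plan is to prove the statement directly by showing $gh = hg$ for every $h \in H$. The engine is a pigeonhole argument that extracts a power of $h$ commuting with $g$, followed by a single application of the unique roots property to strip off that power.

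First I would fix an arbitrary $h \in H$ and consider the iterated conjugates $h^{-j} g h^j$ for $j = 0, 1, 2, \dots$. Each term is a conjugate of $g$ by the element $h^j \in H$, hence lies in the finite set $\{h'^{-1} g h' : h' \in H\}$. By pigeonhole two of them coincide, say $h^{-i} g h^i = h^{-j} g h^j$ with $i < j$; writing $m = j - i \geq 1$ and rearranging gives $h^{-m} g h^m = g$, that is, $h^m$ commutes with $g$.

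The key step is to upgrade this to commutation of $h$ itself with $g$. Since $h^m$ and $g$ commute, one has
\begin{equation*}
(g h g^{-1})^m = g\, h^m\, g^{-1} = h^m,
\end{equation*}
so $ghg^{-1}$ and $h$ are two elements of $G$ with the same $m$-th power. The unique roots property then forces $ghg^{-1} = h$, i.e. $gh = hg$. As $h \in H$ was arbitrary, $g$ lies in the centralizer of $H$. For the final assertion, a torsion free nilpotent group has the unique roots property by the quoted result of Mal'cev, so the conclusion is the corresponding special case. I do not expect a genuine obstacle here; the only point requiring care is recognizing the identity $(ghg^{-1})^m = g\, h^m\, g^{-1}$ as the bridge that converts commutation of a power into the unique roots hypothesis.
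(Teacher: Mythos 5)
Your proof is correct and follows essentially the same route as the paper: pigeonhole on the iterated conjugates $h^{-j}gh^{j}$ to find $m\geq 1$ with $gh^{m}g^{-1}=h^{m}$, then the identity $(ghg^{-1})^{m}=gh^{m}g^{-1}$ together with unique roots to conclude $ghg^{-1}=h$. The paper's version is just terser, leaving the pigeonhole step and the final identity implicit.
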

\begin{proof} Suppose the set $\{ h^{-1}gh:h\in H \}$ is finite for some $g\in G.$ Then for each $h\in H$ there is some $n\geq1$ so $h^{-n}gh^n=g$, i.e. $gh^ng^{-1}=h^n.$  By the unique roots property we have $ghg^{-1}=h.$ 
\end{proof}
The following relies on a very special fact about nilpotent groups.  Every group $G$ contains a subgroup $N$ that is maximal with respect to being both normal and abelian.  But in a \emph{nilpotent} group such a subgroup $N$ is automatically maximal with respect to being abelian.  This makes for a nice exercise, but a proof can be found in \cite[Lemma 2.22]{Clement17}. This is no longer the case for virtually nilpotent groups--indeed any finite, nonabelian simple group is a counter example--and is one reason we are not able to extend Corollary \ref{cor:buildCartan} to the virtually nilpotent case. 

\begin{proposition}\label{prop:Cartantwist} Let $G$ be a torsion free nilpotent group and $\sigma\in Z^2(G,\T).$  Then there is a subgroup $H\leq G$ such that $C^*_r(H,\sigma)$ is a Cartan subalgebra of $C^*_r(G,\sigma).$
\end{proposition}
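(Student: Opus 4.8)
The plan is to take for $H$ a subgroup that is \emph{maximal} among the normal abelian subgroups $L\trianglelefteq G$ for which $\sigma|_{L\times L}$ is symmetric (equivalently, for which $C_r^*(L,\sigma)$ is commutative). Such a maximal element exists by Zorn's lemma: the trivial subgroup qualifies, and an increasing union of subgroups in this class is again normal, abelian, and carries a symmetric restriction of $\sigma$ (any two elements of the union lie in a common member of the chain). Writing $B:=C_r^*(H,\sigma)\subseteq C_r^*(G,\sigma)=:A$, I must verify the defining properties of a Cartan subalgebra: that $B$ is a (commutative) MASA, that $B$ is regular, and that there is a faithful conditional expectation $A\to B$; the approximate-unit condition is automatic since $1\in B$. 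By construction $B$ is commutative.

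Two of these properties are immediate. The canonical conditional expectation $E:A\to B$ is restriction of Fourier coefficients, $E(\sum_g a_g\lambda_g)=\sum_{h\in H}a_h\lambda_h$, and it is faithful because it is compatible with the canonical faithful trace $\tau$. For regularity, normality of $H$ is the point: for every $x\in G$ we have $\lambda_x\lambda_h\lambda_x^{-1}=c\,\lambda_{xhx^{-1}}$ with $c\in\T$ and $xhx^{-1}\in H$, so each $\lambda_x$ normalizes $B$; since the $\lambda_x$ generate $A$, the normalizers of $B$ generate $A$.

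The substance is the MASA property, for which I would compute the relative commutant $B'\cap A$. For commuting $a,b\in G$ set $\beta(a,b)=\sigma(a,b)\sigma(b,a)^{-1}\in\T$, so that $\lambda_a\lambda_b=\beta(a,b)\lambda_b\lambda_a$; this antisymmetrization is a bicharacter on commuting pairs, and conjugating the relation $\lambda_a\lambda_b=\beta(a,b)\lambda_b\lambda_a$ by the unitary $\lambda_x$ yields $\beta(xax^{-1},xbx^{-1})=\beta(a,b)$. Given $x=\sum_g a_g\lambda_g\in B'\cap A$, commutation with each $\lambda_h$ ($h\in H$) forces $|a_{hgh^{-1}}|=|a_g|$, so $g\mapsto|a_g|$ is constant on $H$-conjugacy classes; as $\sum_g|a_g|^2=\tau(x^*x)<\infty$, every $g$ in the support has finite $H$-conjugacy class, whence $g$ centralizes $H$ by Lemma \ref{lem:uroots}. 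Commutation then further forces the support into $K:=\{g\in C_G(H):\beta(g,h)=1\text{ for all }h\in H\}$, and conversely $C_r^*(K,\sigma)\subseteq B'\cap A$, so $B'\cap A=C_r^*(K,\sigma)$. Thus $B$ is a MASA precisely when $K=H$.

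The main obstacle — and the one place nilpotency is essential — is proving $K=H$. First, $K$ is a \emph{normal} subgroup of $G$ containing $H$: normality follows from conjugation-invariance of $\beta$ together with normality of $H$, since for $g\in K$ and $x\in G$ one gets $\beta(xgx^{-1},h)=\beta(g,x^{-1}hx)=1$. If $K\supsetneq H$, then $K/H$ is a nontrivial normal subgroup of the nilpotent group $G/H$ and hence meets its center; choosing $g\in K\setminus H$ with $gH\in Z(G/H)$, the subgroup $H_1:=\la H,g\ra$ is abelian (as $g\in C_G(H)$), is normal in $G$ (as $[G,g]\subseteq H$), and carries a symmetric restriction of $\sigma$ (the bicharacter $\beta$ vanishes on the generating set $H\cup\{g\}$ of the abelian group $H_1$, hence on all of $H_1$). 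This contradicts maximality of $H$, so $K=H$ and $B$ is a Cartan subalgebra. I expect the verification that $H_1$ again belongs to the chosen class — combining the isotropy/cocycle bookkeeping with the nilpotent ``nontrivial normal subgroups meet the center'' fact — to be the delicate step, exactly as the quoted fact that maximal normal abelian subgroups of nilpotent groups are self-centralizing is the crux in the untwisted case.
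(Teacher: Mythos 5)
Your proposal is correct and follows essentially the same route as the paper: the canonical conditional expectation and normality of $H$ handle the expectation and regularity, and the identical $\ell^2$-summability plus unique-roots argument (Lemma \ref{lem:uroots}) pins the commutant down to elements $g$ centralizing $H$ with $\sigma(g,h)=\sigma(h,g)$ for all $h\in H$. The only divergence is in how maximality is packaged: the paper lifts to the nilpotent unitary group $\tilde G=\langle \lambda_g : g\in G\rangle$ and cites the fact that a maximal normal abelian subgroup of a nilpotent group is maximal abelian, whereas you stay in $G$, track the cocycle through the antisymmetrization $\beta$, and reprove that fact in the twisted setting via ``nontrivial normal subgroups of a nilpotent group meet the center'' --- an equivalent, slightly more self-contained bookkeeping.
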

\begin{proof} Let $\tilde{G}=\la  \lambda_g:g\in G \ra$ be the subgroup of the unitary group of  $C^*_r(G,\sigma)$ generated by the $\lambda_g.$  Since $\lambda_g\lambda_h=\sigma(g,h)\lambda_{gh}$ and $\sigma(g,h)\in Z(\tilde{G})$ it follows that $\tilde{G}$ is a central extension of a nilpotent group and therefore nilpotent. Let $\tilde H\leq \tilde{G}$ be a normal abelian subgroup that is also maximal abelian (\cite[Lemma 2.22]{Clement17}).  Set
$H\leq G$ to be the image of $\tilde{H}$ in the quotient mapping $\tilde{G}\to \tilde{G}/(\tilde{G}\cap \T)\cong G$.  We claim that $C_r^*(H,\sigma)$ is a Cartan subalgebra of $C_r^*(G,\sigma).$

The canonical conditional expectation $E:C^*_r(G,\sigma)\rightarrow C^*_r(H,\sigma)$ takes care of that requirement. Next, since  $H$ is normal in $G$, it follows that $C^*_r(H,\sigma)$ is a regular abelian subalgebra.  We show that it is maximal abelian.

Let $x\in C_r^*(G,\sigma)\cap C_r^*(H,\sigma)'.$ Decompose $x=\sum_{g\in G}x_g\lambda_g$ where $x_g\in\C$ and convergence is in the $L^2$-norm coming from the canonical trace.   Then for each $h\in H$ we have  
\begin{equation}\label{eq:cocomm}
\sum_{g\in G} x_g\sigma(g,h)\lambda_{gh}  =\sum_{g\in G} x_g\lambda_g\lambda_h  =  x\lambda_h = \lambda_hx = \sum_{g\in G}x_g\sigma(h,g)\lambda_{hg} = \sum_{g\in G}x_{h^{-1}gh}\sigma(h,h^{-1}gh)\lambda_{gh}
\end{equation} 
Since the family $\{ x_g \}_{g\in G}$ is $\ell^2$-summable, for each $g\in G$ we either have $x_g=0$ or the set $\{ h^{-1}gh:h\in H  \}$ is finite. Suppose that $x_g\neq 0.$ Then by Lemma \ref{lem:uroots} $g$ is in the centralizer of $H$ and by (\ref{eq:cocomm}) that $\sigma(g,h)=\sigma(h,g)$ for all $h\in H.$  Then 
\begin{equation*}
\lambda_g\lambda_h = \sigma(g,h)\lambda_{gh}=\sigma(h,g)\lambda_{hg}=\lambda_h\lambda_g
\end{equation*}
Hence $\lambda_g\in \tilde H$ by maximality, so $g\in H$.  Therefore $x\in C_r^*(H,\sigma).$
\end{proof}
Alex Kumjian proved Proposition \ref{prop:Cartantwist} in the case that $G$ is abelian (see \cite[Example 12]{Kumjian86}).  In fact he proved something stronger--that the Cartan subalgebras are C*-diagonals.  When $G$ is not abelian it is no longer the case that $C^*_r(H,\sigma)$ in Proposition \ref{prop:Cartantwist} is a C*-diagonal as we show in the following two examples.

\begin{example}\label{ref:nondiagonalsimple} These examples were studied in \cite{Eckhardt16} where it was shown that they are all simple A$\T$-algebras with real rank 0 and their Elliott invariants were calculated.   Let
\begin{equation*}
\text{UT}_4 = \left\{ \left[\begin{array}{cccc} 1 & x_{12} & x_{13} & x_{14}\\ 
                                                                     0 & 1 & x_{23} & x_{24}\\
                                                                     0 & 0 & 1 & x_{34}\\
                                                                     0 & 0 & 0 & 1
                               \end{array}\right] : x_{ij}\in\Z \right\}
\end{equation*}
Notice that $Z(\text{UT}_4)\cong \Z$ is the subgroup consisting of matrices with $x_{ij}=0$ when $(i,j) \neq (1,4).$ Let $G = \text{UT}_4/Z(\text{UT}_4)$ and let $c:G\to \text{UT}_4$ be the unique lifting such that the $(1,4)$ entry of $c_g$ is 0 for all $g\in G.$ Let $\gamma\in \T$ have infinite order and define $\sigma\in Z^2(G,\T)$ by
\begin{equation*}
\sigma(x,y) = \gamma^{c_xc_yc_{xy}^{-1}}
\end{equation*}
Let $H$ be the image of 
\begin{equation*}
\left\{ \left[\begin{array}{cccc} 1 & 0 & 0 & x_{14}\\ 
                                                                     0 & 1 & 0 & x_{24}\\
                                                                     0 & 0 & 1 & x_{34}\\
                                                                     0 & 0 & 0 & 1
                               \end{array}\right] : x_{ij}\in\Z \right\}
\end{equation*}
in $G$ and let $g$ be the image of $\left[\begin{array}{cccc} 1 & 0 & 0 & 0\\ 
                                                                     0 & 1 & 1 & 0\\
                                                                     0 & 0 & 1 & 0\\
                                                                     0 & 0 & 0 & 1
                               \end{array}\right]$ in $G.$
Then one checks (easily) that $C_r^*(H,\sigma)\cong C_r^*(H)\cong C(\T^2)$ is a Cartan subalgebra.  As in Section \ref{sec:trandcp} we view $C^*_r(G,\sigma)$ as the twisted crossed product $C^*_r(H,\sigma)\rtimes G/H$  giving $C^*_r(G,\sigma)$ the structure of a \emph{toplogically} principal twisted groupoid C*-algebra.  But it is not a principal groupoid. Let $\alpha:\hat H\rightarrow\hat H$ be the automorphism induced by the dual action of conjugation by $\lambda_g.$  Then $\alpha$ fixes the trivial character of $H,$ hence there is non-trivial isotropy at the trivial character.
\end{example}
That was the easiest example of a \emph{classifiable} pair that we could think of.  But the same obstruction is present in non-simple examples and easier to illustrate.
\begin{example} Let $G = \Z^2\rtimes \Z$ where the action is implemented by $\left[ \begin{array}{cc} 1 & 1\\ 0 & 1 \end{array} \right]$, i.e. $G$ is the integer Heisenberg group.  Then $C^*(\Z^2)$ is a Cartan subalgebra of $C^*(G)$ but not a C*-diagonal since the dual action has fixed points.
\end{example}
\begin{corollary}\label{cor:buildCartan} Let $G$ be a finitely generated  nilpotent group and $\pi$ an irreducible representation of $G.$  Then there is a torsion free nilpotent group $H$,  a 2-cocycle $\sigma\in Z^2(H,\T)$ and an $n\geq 1$ such that
\begin{equation*}
C^*_\pi(G) \cong M_n\otimes C_r^*(H,\sigma)
\end{equation*}
Moreover there is a subgroup $N\leq H$ such that $D_n\otimes C_r^*(N,\sigma)$ is a Cartan subalgebra of $M_n\otimes C^*(H,\sigma)$ where $D_n$ is a diagonal MASA in $M_n.$
\end{corollary}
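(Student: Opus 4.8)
The plan is to assemble the statement from two ingredients already in hand: the matrix decomposition of Corollary \ref{cor:RR0nilpotent} and the Cartan subalgebra of a twisted group C*-algebra produced by Proposition \ref{prop:Cartantwist}, and then to transport the latter Cartan subalgebra across the finite-dimensional tensor factor $M_n$.

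First I would apply Corollary \ref{cor:RR0nilpotent} to $G$ and $\pi$ to obtain a finitely generated torsion free group $H$, a cocycle $\sigma \in Z^2(H,\T)$, and an integer $n \geq 1$ with $C^*_\pi(G) \cong M_n \otimes C^*_r(H,\sigma)$. The one point needing attention is that $H$ is not merely torsion free but torsion free \emph{nilpotent}: inspecting the proof of Corollary \ref{cor:RR0nilpotent}, the group $H$ is realized as a subgroup of $G/\text{FC}(G)$, which is a quotient of the nilpotent group $G$ and hence nilpotent, so $H$ is nilpotent as a subgroup of a nilpotent group. This observation is exactly what makes Proposition \ref{prop:Cartantwist} applicable, and I expect it to be the real linchpin of the argument, since that proposition requires torsion free nilpotency and Corollary \ref{cor:RR0nilpotent} only advertises torsion freeness.

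Next I would feed this $H$ and $\sigma$ into Proposition \ref{prop:Cartantwist} to obtain a subgroup $N \leq H$—which is in fact normal in $H$ by the construction in that proof—such that $C^*_r(N,\sigma)$ is a Cartan subalgebra of $C^*_r(H,\sigma)$. It then remains only to check the general fact that if $B \subseteq A$ is a Cartan subalgebra then $D_n \otimes B \subseteq M_n \otimes A$ is again Cartan, where $D_n$ is the diagonal MASA of $M_n$; applying this with $A = C^*_r(H,\sigma)$ and $B = C^*_r(N,\sigma)$ completes the proof.

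For this final step I would verify the defining properties of a Cartan subalgebra (in the sense of \cite{Renault08}) directly, which is routine precisely because the extra factor $M_n$ is finite dimensional. The faithful conditional expectation is the tensor product of the diagonal compression $M_n \to D_n$ with the canonical expectation $C^*_r(H,\sigma) \to C^*_r(N,\sigma)$, and $D_n \otimes B$ visibly contains the unit. Maximal abelianness is an easy matrix computation: an element of $M_n(A)$ commuting with $D_n \otimes 1$ must be diagonal, and its diagonal entries must then lie in $B' \cap A = B$ since $B$ is a MASA, so the relative commutant of $D_n \otimes B$ equals $D_n \otimes B$. Regularity follows because the elementary normalizers $e_{ij} \otimes \lambda_h$ generate $M_n \otimes C^*_r(H,\sigma)$, using that $N$ is normal in $H$ so that $\lambda_h$ normalizes $C^*_r(N,\sigma)$, and that $e_{ij}$ normalizes $D_n$. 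Thus the main potential obstacle is not this tensoring step but the verification of nilpotency of $H$ in the first paragraph; once that is secured the corollary is a short assembly of the earlier results.
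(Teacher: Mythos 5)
Your proposal is correct and follows exactly the paper's route: the paper's entire proof is the one-line citation ``This is Corollary \ref{cor:RR0nilpotent} followed by Proposition \ref{prop:Cartantwist},'' and you simply supply the details left implicit there. Both of the points you flag --- that $H$ is nilpotent because it sits inside a quotient of a nilpotent group, and that a Cartan pair tensored with $(M_n,D_n)$ remains Cartan --- are accurate and are the right things to check.
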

\begin{proof}  
This is Corollary \ref{cor:RR0nilpotent} followed by Proposition \ref{prop:Cartantwist}.
\end{proof}

 \bibliographystyle{plain}

\begin{thebibliography}{10}

\bibitem{Barlak17}
Sel\c{c}uk Barlak and Xin Li.
\newblock Cartan subalgebras and the {UCT} problem.
\newblock {\em Adv. Math.}, 316:748--769, 2017.

\bibitem{Baumslag71Nilpotent}
Gilbert Baumslag.
\newblock {\em Lecture notes on nilpotent groups}.
\newblock Regional Conference Series in Mathematics, No. 2. American
  Mathematical Society, Providence, R.I., 1971.

\bibitem{Bedos91}
Erik B\'{e}dos.
\newblock Discrete groups and simple {$C^*$}-algebras.
\newblock {\em Math. Proc. Cambridge Philos. Soc.}, 109(3):521--537, 1991.

\bibitem{Blackadar98}
Bruce Blackadar.
\newblock {\em {$K$}-theory for operator algebras}, volume~5 of {\em
  Mathematical Sciences Research Institute Publications}.
\newblock Cambridge University Press, Cambridge, second edition, 1998.

\bibitem{Carey84}
A.~L. Carey and W.~Moran.
\newblock Characters of nilpotent groups.
\newblock {\em Math. Proc. Cambridge Philos. Soc.}, 96(1):123--137, 1984.

\bibitem{White23}
José~R. Carrión, James Gabe, Christopher Schafhauser, Aaron Tikuisis, and
  Stuart White.
\newblock Classifying *-homomorphisms i: Unital simple nuclear {C}*-algebras,
  arxiv:2307.06480.
\newblock 2023.

\bibitem{Clement17}
Anthony~E. Clement, Stephen Majewicz, and Marcos Zyman.
\newblock {\em The theory of nilpotent groups}.
\newblock Birkh\"{a}user/Springer, Cham, 2017.

\bibitem{Echterhoff90}
Siegfried Echterhoff.
\newblock On maximal prime ideals in certain group {$C^*$}-algebras and crossed
  product algebras.
\newblock {\em J. Operator Theory}, 23(2):317--338, 1990.

\bibitem{Echterhoff11}
Siegfried Echterhoff and Heath Emerson.
\newblock Structure and {$K$}-theory of crossed products by proper actions.
\newblock {\em Expo. Math.}, 29(3):300--344, 2011.

\bibitem{Eckhardt14}
Caleb Eckhardt.
\newblock Quasidiagonal representations of nilpotent groups.
\newblock {\em Adv. Math.}, 254:15--32, 2014.

\bibitem{Eckhardt16}
Caleb Eckhardt and Craig Kleski and Paul McKenney.
\newblock Classification of {${\rm C}^*$}-algebras generated by
              representations of the unitriangular group {$UT(4,\Bbb{Z})$}.
\newblock {\em J. Funct. Anal.}, 271(4):1022--1042, 2016.

\bibitem{EckhardtGillaspy2016Irreducible}
Caleb Eckhardt and Elizabeth Gillaspy.
\newblock Irreducible representations of nilpotent groups generate classifiable
  {${\rm C}^*$}-algebras.
\newblock {\em M\"{u}nster J. Math.}, 9(1):253--261, 2016.

\bibitem{EckhardtGillaspyMcKenney2019Finite}
Caleb Eckhardt, Elizabeth Gillaspy, and Paul McKenney.
\newblock Finite decomposition rank for virtually nilpotent groups.
\newblock {\em Trans. Amer. Math. Soc.}, 371(6):3971--3994, 2019.

\bibitem{EckhardtMcKenney2018Finitely}
Caleb Eckhardt and Paul McKenney.
\newblock Finitely generated nilpotent group {$\rm C^*$}-algebras have finite
  nuclear dimension.
\newblock {\em J. Reine Angew. Math.}, 738:281--298, 2018.

\bibitem{Gong20}
Guihua Gong, Huaxin Lin, and Zhuang Niu.
\newblock A classification of finite simple amenable {$\mathcal Z$}-stable
  {${\rm C}^\ast$}-algebras, {II}: {${\rm C}^\ast$}-algebras with rational
  generalized tracial rank one.
\newblock {\em C. R. Math. Acad. Sci. Soc. R. Can.}, 42(4):451--539, 2020.

\bibitem{Green1980imprimitivity}
Philip Green.
\newblock The structure of imprimitivity algebras.
\newblock {\em J. Funct. Anal.}, 36(1):88--104, 1980.

\bibitem{Kumjian86}
Alexander Kumjian.
\newblock On {$C^\ast$}-diagonals.
\newblock {\em Canad. J. Math.}, 38(4):969--1008, 1986.

\bibitem{Kwasniewski86}
Bartosz K. Kwa\'{s}niewski and Kang Li and Adam Skalski.
\newblock The {H}aagerup property for twisted groupoid dynamical
              systems.
\newblock {\em J. Funct. Anal.}, 283(1):Paper No. 109484, 43, 2022.

\bibitem{Levit22}
Arie Levit and Itamar Vigdorovich.
\newblock Characters of solvable groups, {H}ilbert-{S}chmidt stability and
  dense periodic measures, arxiv:2206.02268.
\newblock 2022.

\bibitem{Li20}
Xin Li.
\newblock Every classifiable simple {$\rm C^*$}-algebra has a {C}artan
  subalgebra.
\newblock {\em Invent. Math.}, 219(2):653--699, 2020.

\bibitem{PackerRaeburn1989twisted}
Judith~A. Packer and Iain Raeburn.
\newblock Twisted crossed products of {$C^*$}-algebras.
\newblock {\em Math. Proc. Cambridge Philos. Soc.}, 106(2):293--311, 1989.

\bibitem{PackerRaeburn1992structure}
Judith~A. Packer and Iain Raeburn.
\newblock On the structure of twisted group {$C^*$}-algebras.
\newblock {\em Trans. Amer. Math. Soc.}, 334(2):685--718, 1992.

\bibitem{Renault08}
Jean Renault.
\newblock Cartan subalgebras in {$C^*$}-algebras.
\newblock {\em Irish Math. Soc. Bull.}, (61):29--63, 2008.

     
\bibitem{Rieffel81}
Marc A. Rieffel.
\newblock  {$C^{\ast} $}-algebras associated with irrational rotations.
\newblock {\em Pacific J. Math.}, 93(2):415--429, 1981.



\bibitem{Rordam04}
Mikael R\o rdam.
\newblock  The stable and the real rank of {$\mathcal Z$}-absorbing {$C^*$}-algebras.
\newblock {\em Internat. J. Math.}, 15(10):1065--1084, 2004.

\bibitem{RosenbergSchochet1987UCT}
Jonathan Rosenberg and Claude Schochet.
\newblock The {K}\"unneth theorem and the universal coefficient theorem for
  {K}asparov's generalized {$K$}-functor.
\newblock {\em Duke Math. J.}, 55(2):431--474, 1987.

\bibitem{Tu99}
Jean-Louis Tu.
\newblock La conjecture de {B}aum-{C}onnes pour les feuilletages moyennables.
\newblock {\em $K$-Theory}, 17, 1999.

\end{thebibliography}

\end{document}